\newtheorem{theorem}{Theorem}[section]
\newtheorem{lemma}{Lemma}[section]
\newtheorem{remark}{Remark}[section]
\newcommand{\bld}[1]{\hbox{\boldmath$#1$}}    
\newcommand{\Th}{\mathcal{T}_h}
\newcommand{\Eh}{\mathcal{E}_h}
\newcommand{\What}{\widehat W_{h,0}^{k-1}}
\newcommand{\Wc}{\mathcal {W}_{h,0}^{1}}
\newcommand{\Wh}{W_{h}^{k}}
\newcommand{\Vhat}{\widehat {\bld V}_{h,0}^{k-1}}
\newcommand{\Vhato}{\widehat {\bld V}_{h}^{k-1}}
\newcommand{\Vc}{\bld{\mathcal {V}}_{h,0}^{1}}
\newcommand{\Vh}{\bld V_{h,0}^{k}}
\newcommand{\Vhd}{\bld V_{h,0}^{k,\partial}}
\newcommand{\Vhos}{\bld V_{h}^{k,o*}}
\newcommand{\Vho}{\bld V_{h}^{k,o}}
\newcommand{\Vhr}{\bld V_{h,0}^{k,\mathrm{cst}}}
\newcommand{\Sh}{X_{h,0}^{k+1}}
\newcommand{\tang}{\mathsf{tang}}
\newcommand{\vhat}{\widehat {\bld v}_{h}}
\newcommand{\vh}{\bld v_{h}}
\newcommand{\vhd}{\bld v_{h}^\partial}
\newcommand{\vho}{\bld v_{h}^o}
\newcommand{\whd}{\bld w_{h}^\partial}
\newcommand{\what}{\widehat {\bld w}_{h}}
\newcommand{\uhat}{\widehat {\bld u}_{h}}
\newcommand{\uh}{\bld u_{h}}
\newcommand{\uhd}{\bld u_{h}^\partial}
\newcommand{\vph}{\vv{\nabla}\times \phi_h}
\newcommand{\vqh}{\vv{\nabla}\times \psi_h}
\newcommand{\ph}{\phi_{h}}
\newcommand{\qh}{\psi_{h}}
\newcommand{\jmp}[1]{[\![#1 ]\!]}
\newcommand{\avg}[1]{\{\!\!\{#1 \}\!\!\}}
\begin{document}
\title[ASP-HDG]{Uniform auxiliary space preconditioning for HDG methods for 
elliptic operators with a parameter dependent low order term}
\author{Guosheng Fu}
\address{Department of Applied and Computational Mathematics and 
Statistics, University of Notre Dame, USA.}
\email{gfu@nd.edu}
 \thanks{We gratefully acknowledge the partial support of this work
 from U.S. National Science Foundation through grant DMS-2012031.}

\keywords{Auxiliary space preconditioner, HDG methods, divergence-conforming
  HDG,  reaction diffusion equation, biharmonic equation}
\subjclass{65N30, 65N12, 76S05, 76D07}
\begin{abstract}
  The auxiliary space preconditioning (ASP) technique is 
applied to the HDG schemes for three different elliptic 
problems with a parameter dependent low order term, namely, 
a symmetric interior penalty HDG scheme for the scalar reaction-diffusion equation, 
a divergence-conforming HDG scheme for a vectorial reaction-diffusion equation, 
and a $C^0$-continuous interior penalty HDG scheme for the generalized biharmonic equation with a low order term.
Uniform preconditioners are obtained for each case and the general ASP theory
by J. Xu \cite{Xu96} is used to prove the optimality with respect to the mesh size and
uniformity with respect to the low order parameter. 
\end{abstract}
\maketitle

\section{Introduction}
\label{sec:intro}
The hybridizable discontinuous Galerkin (HDG) methods were originally introduced 
about a decade ago in \cite{CockburnGopalakrishnanLazarov09} for diffusion problems as a subclass 
of discontinuous Galerkin finite element methods  
amenable to {\it static condensation} and hence to efficient implementation.
Since then, the HDG methodology has been successfully applied to a variety of
problems in computation fluid dynamics and continuum mechanics \cite{Cockburn01,
Cockburn02, Nguyen03, Cockburn03}. 

Compared with the fast development of different HDG schemes for various partial
differential equations (PDEs), there are relative few work on the issue of linear
system solvers for the resulting condensed HDG system.
The first investigation on fast HDG solvers appeared in \cite{CockburnDubois14}
where the authors presented a geometric multigrid algorithm for an HDG scheme for the
diffusion problem. We also mention the very recent work \cite{Bui20} on a multilevel HDG preconditioner
 that is applicable to various PDEs, and refer to \cite[Section 3.1]{Bui20} 
for a short review on existing work on solvers/preconditioners for HDG systems.

The auxiliary space preconditioning (ASP) method was 
first proposed by Xu in \cite{Xu96}, which givens an optimal (Poisson-based)
preconditioner 
for symmetric positive definite (SPD) linear systems; see the review \cite{Xu10}.
Roughly speaking, the main idea of ASP is to use 
well-established fast Poisson solvers  as building blocks to
develop {\it user-friendly} solvers for various discretized PDEs \cite{Xu10}.
ASP has been applied to the HDG method in \cite{LiXie16} 
and the closely related weak Galerkin method in \cite{Chen15}, both for the pure
diffusion problem.

In this work, we apply ASP to HDG schemes for three different elliptic 
problems with a parameter dependent low order term.
In particular, we consider a symmetric interior penalty HDG scheme for the
scalar reaction-diffusion equation, a divergence-conforming HDG scheme 
for a vectorial reaction-diffusion equation, and a $C^0$-continuous interior
penalty HDG scheme for the generalized biharmonic equation with a low order term.
Uniform preconditioners are obtained for each case and the general ASP theory
\cite{Xu96} is used to prove the optimality with respect to the mesh size and
uniformity with respect to the low order parameter. We do not theoretically
address the issue of robustness of the preconditioner with respect to the polynomial
degree, but our numerical results indicate the growth of the iteration counts of
the preconditioned conjugate gradient (PCG) algorithm on the polynomial degree is quite mild.
We point out that while our analysis is focused on interior penalty type  HDG schemes 
 for each problem, our results are easily applicable to other HDG formulations and other
hybrid finite element methods.

For the scalar and vectorial reaction-diffusion problems, we use the linear
continuous finite element space as the auxiliary space, whose associated linear
systems is precondioned by hypre's BoomerAMG preconditioner \cite{hypre,
Henson02}. 
For the generalized biharmonic problem, we borrow the idea from the 
preconditioner for a divergence-free DG scheme for the Stokes 
problem in \cite{Ayuso14}, where we take a larger auxiliary space 
whose associated 
linear system is easier to precondition than the original system for the
generalized  biharmonic
problem. Due to a technical difficulty, the robustness of the preconditioner 
for the generalized biharmonic problem can
only be proven for the simply supported boundary condition case, 
which corresponds to the slip boundary condition for Stokes flow discussed in 
\cite{Ayuso14}.  Our numerical
results confirm that our proposed preconditioner does deteriorate in
performance with a mesh dependent convergence behavior
when applied to the clamped boundary condition case.
The construction of an optimal preconditioner 
for the generalized biharmonic problem with a clamped boundary condition is 
the subject of a forthcoming paper.

The rest of the paper is organized as follows.
In Section \ref{sec:asp}, we briefly review the ASP theory in \cite{Xu96}.
In Section \ref{sec:hdg}, we construct ASP for the aforementioned
HDG schemes and apply the ASP theory to prove their optimality.
In Section \ref{sec:num}, we present numerical examples to 
support the results in Section \ref{sec:hdg}. 
Finally, we conclude in Section \ref{sec:conclude}.

\section{The auxiliary space preconditioning theory}
\label{sec:asp}
In this section, we review the abstract ASP theory developed in 
\cite{Xu96}. We closely follow the discussion in \cite[Section 2]{Xu96}.

The ASP technique can be interpreted as a two level non-nested 
multigrid preconditioner. 
To be specific, we consider a finite dimensional 
linear inner product space $\mathcal{V}$ with an inner product $(\cdot , \cdot)$, 
and a linear symmetric positive definite (SPD) operator $A:\mathcal{V}\rightarrow
\mathcal{V}$ with respect to this inner product. The ASP technique serves to
construct an optimal (SPD) preconditioner  $B$ for the operator $A$ so that
mesh independent convergence behavior can be obtained for 
the 
PCG algorithm with
the preconditioner $B$ for solving the linear
system problem $$Au=f\in \mathcal{V}.$$
The main ingredient of the  ASP technique is an auxiliary linear inner product space 
$\mathcal{V}_0$ together with an operator $A_0:\mathcal{V}_0\rightarrow
\mathcal{V}_0$ that is SPD with respect to an inner product $[\cdot, \cdot]$
on $\mathcal{V}_0$. The auxiliary space $\mathcal{V}_0$ is chosen in such a way
that the operator $A_0$ can be more easily preconditioned than $A$.

The auxiliary space preconditioner in \cite{Xu96} takes the following 
additive form:
\begin{align}
  \label{asp-prec}
  B = R+\Pi\, B_0\,\Pi^t,
\end{align}
where  $B_0:\mathcal{V}_0\rightarrow\mathcal{V}_0$ 
is an {\it optimal} preconditioner for $A_0$, 
$\Pi:\mathcal{V}_0\rightarrow\mathcal{V}$
is an operator that links the two spaces and 
$\Pi^t$ is the {\it adjoint} operator defined by 
\begin{align*}
  [\Pi^t v, w] = (v, \Pi w)\quad v\in \mathcal{V}, w\in \mathcal{V}_0,
\end{align*}
and $R:\mathcal{V}\rightarrow\mathcal{V}$ is an SPD operator serves to
resolve what can not be resolved by the auxiliary space.
If $\mathcal{V}_0$ is viewed as a {\it coarse} space in multigrid terminology, 
$\Pi$ can be considered as the prolongation operator, 
$\Pi^t$ is the restriction operator, $R$ is the smoother, and $B_0$ is the
coarse grid solver.
In most applications, the smoother $R$ is given by a simple relaxation scheme such as 
Jacobi or symmetric Gauss-Seidel method.

We cite verbatimly below  the main
abstract result on ASP theory
\cite[Theorem 2.1]{Xu96}, which gives a sufficient condition for the optimality
of the preconditioner \eqref{asp-prec}.
\begin{theorem}[Theorem 2.1 of \cite{Xu96}]
  \label{thm:asp}
  Assume that there are some non-negative constants $\alpha_0, \alpha_1, 
  \lambda_0, \lambda_1$ and $\beta_1$ such that, for all $v\in \mathcal{V}$ and 
  $w\in \mathcal{V}_0$, 
  \begin{subequations}
    \label{asp-assum}
    \begin{align}
      \label{asp-as1}
    \alpha_0\rho_A^{-1}(v,v)\le &(Rv,v)\le \alpha_1 \rho_A^{-1} (v,v), \\
      \label{asp-as2}
    \lambda_0 [w,w]_{A_0}\le &[B_0A_0w, w]_{A_0}\le \lambda_1 [w,w]_{A_0},\\
      \label{asp-as3}
    \|\Pi w\|_A^2\le & \beta_1 \|w\|_{A_0}^2, 
    \end{align}
  and furthermore, assume that there exists a linear operator
  $P:\mathcal{V}\rightarrow \mathcal{V}_0$ and positive constants 
  $\beta_0$ and $\gamma_0$ such that, 
  \begin{align}
      \label{asp-as4}
    \|Pv\|_{A_0}^2\le \beta_0^{-1} \|v\|_A^2
  \end{align}
  and 
  \begin{align}
      \label{asp-as5}
    \|v-\Pi Pv\|^2\le \gamma_0^{-1}\rho_A^{-1} \|v\|_A^2.
  \end{align}
  Then the preconditioner given by \eqref{asp-prec}
satisfies 
\begin{align*}
  \kappa(BA)\le (\alpha_1+\beta_1\lambda_1)((\alpha_0\gamma_0)^{-1}
  +(\beta_0\lambda_0)^{-1}).
\end{align*}
In particular, if $P$ is a right inverse of $\Pi$ namely $\Pi P v = v$
for $v\in \mathcal V$, then 
\begin{align*}
\kappa((\Pi\,B_0\,\Pi^t)A)\le \frac{\beta_1}{\beta_0}\frac{\lambda_1}{\lambda_0}.
\end{align*}
  \end{subequations}
\end{theorem}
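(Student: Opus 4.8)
The plan is to reduce the condition‑number estimate to two one‑sided spectral inequalities and then verify each using the five assumptions. Since $A$ and $B$ are both SPD, the operator $BA$ is self‑adjoint and positive definite with respect to the $A$-inner product $(A\,\cdot,\cdot)$, so its eigenvalues coincide with the generalized eigenvalues of the pencil $(B,A^{-1})$. Writing $\|v\|_{A^{-1}}^2:=(A^{-1}v,v)$ and using $(BA)^{-1}=A^{-1}B^{-1}$, one has
\[
\lambda_{\max}(BA)=\max_{u\neq0}\frac{(Bu,u)}{\|u\|_{A^{-1}}^2},\qquad
\lambda_{\min}(BA)=\Big[\max_{v\neq0}\frac{(B^{-1}v,v)}{\|v\|_A^2}\Big]^{-1}.
\]
Thus it suffices to produce constants $\Lambda,\mu$ with $(Bu,u)\le\Lambda\,\|u\|_{A^{-1}}^2$ and $(B^{-1}v,v)\le\mu\,\|v\|_A^2$ for all $u,v$, after which $\kappa(BA)=\lambda_{\max}/\lambda_{\min}\le\Lambda\mu$. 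Along the way I would record the equivalent form of \eqref{asp-as2}: substituting $w=A_0^{-1}\xi$ turns it into $\lambda_0\|\xi\|_{A_0^{-1}}^2\le[B_0\xi,\xi]\le\lambda_1\|\xi\|_{A_0^{-1}}^2$, and inverting gives $[B_0^{-1}\eta,\eta]\le\lambda_0^{-1}\|\eta\|_{A_0}^2$.

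For the upper bound $\Lambda$, I would expand $(Bu,u)=(Ru,u)+(\Pi B_0\Pi^t u,u)$ and use the adjoint identity $[\Pi^t v,w]=(v,\Pi w)$ to write $(\Pi B_0\Pi^t u,u)=[B_0\Pi^t u,\Pi^t u]$. The first term is bounded by \eqref{asp-as1} together with $\rho_A^{-1}(u,u)\le\|u\|_{A^{-1}}^2$ (which follows from $A\le\rho_A I$), giving $(Ru,u)\le\alpha_1\|u\|_{A^{-1}}^2$. For the second term I apply the equivalent form of \eqref{asp-as2} with $\xi=\Pi^t u$ to get $[B_0\Pi^t u,\Pi^t u]\le\lambda_1\|\Pi^t u\|_{A_0^{-1}}^2$, and then estimate the dual norm by a duality argument: $\|\Pi^t u\|_{A_0^{-1}}=\sup_{w}[\Pi^t u,w]/\|w\|_{A_0}=\sup_w(u,\Pi w)/\|w\|_{A_0}\le\sqrt{\beta_1}\,\|u\|_{A^{-1}}$, where the last step uses Cauchy--Schwarz in the $A^{-1}/A$ pairing and \eqref{asp-as3}. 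Adding the two contributions yields $\Lambda=\alpha_1+\beta_1\lambda_1$.

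The lower bound on $\lambda_{\min}$ is the crux and the main obstacle. The key tool is the variational (additive Schwarz / parallel subspace correction) characterization of the inverse of the additive preconditioner \eqref{asp-prec}, namely
\[
(B^{-1}v,v)=\min_{\substack{v=v_R+\Pi w\\ v_R\in\mathcal V,\;w\in\mathcal V_0}}\Big\{(R^{-1}v_R,v_R)+[B_0^{-1}w,w]\Big\},
\]
which holds because $R$ and $B_0$ are SPD; establishing this identity and then choosing a good admissible splitting is where all the work lies. I would take the distinguished decomposition $w=Pv$ and $v_R=v-\Pi Pv$ supplied by the operator $P$. The coarse term is controlled by the inverted \eqref{asp-as2} and by \eqref{asp-as4}, giving $[B_0^{-1}Pv,Pv]\le\lambda_0^{-1}\|Pv\|_{A_0}^2\le(\beta_0\lambda_0)^{-1}\|v\|_A^2$. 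The smoother term uses \eqref{asp-as1} in the form $R^{-1}\le\alpha_0^{-1}\rho_A$ together with the approximation estimate \eqref{asp-as5}, so that $(R^{-1}v_R,v_R)\le\alpha_0^{-1}\rho_A\,\|v-\Pi Pv\|^2\le(\alpha_0\gamma_0)^{-1}\|v\|_A^2$, the factors of $\rho_A$ cancelling exactly. Hence $\mu=(\alpha_0\gamma_0)^{-1}+(\beta_0\lambda_0)^{-1}$, and multiplying $\Lambda\mu$ gives the asserted bound $\kappa(BA)\le(\alpha_1+\beta_1\lambda_1)\big((\alpha_0\gamma_0)^{-1}+(\beta_0\lambda_0)^{-1}\big)$.

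For the special case where $P$ is a right inverse of $\Pi$, I would simply discard the smoother and repeat the argument with $\widehat B=\Pi B_0\Pi^t$, which is now SPD since $\Pi P v=v$ forces $\Pi$ to be surjective. The upper bound computation above drops the $R$-term and yields $\lambda_{\max}(\widehat B A)\le\beta_1\lambda_1$. For the lower bound the characterization collapses to $(\widehat B^{-1}v,v)=\min_{v=\Pi w}[B_0^{-1}w,w]$, and the admissible choice $w=Pv$ (legitimate because $\Pi Pv=v$) gives $(\widehat B^{-1}v,v)\le(\beta_0\lambda_0)^{-1}\|v\|_A^2$, so $\lambda_{\min}(\widehat B A)\ge\beta_0\lambda_0$. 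Dividing gives $\kappa((\Pi B_0\Pi^t)A)\le\frac{\beta_1}{\beta_0}\frac{\lambda_1}{\lambda_0}$, completing the proof.
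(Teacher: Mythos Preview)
Your proof is correct. Note, however, that the paper does not supply its own proof of this statement: it is quoted verbatim as Theorem~2.1 of \cite{Xu96} and used as a black box, so there is no in-paper argument to compare against. That said, your route---bounding $\lambda_{\max}(BA)$ via $(Bu,u)\le\Lambda\|u\|_{A^{-1}}^2$ and $\lambda_{\min}(BA)$ via the additive Schwarz (Lions-type) identity $(B^{-1}v,v)=\min_{v=v_R+\Pi w}\{(R^{-1}v_R,v_R)+[B_0^{-1}w,w]\}$ with the distinguished splitting $w=Pv$, $v_R=v-\Pi Pv$---is precisely the standard proof structure in \cite{Xu96}, and your handling of the right-inverse case as a fictitious space preconditioner is the expected specialization.
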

Here $\rho_A= \rho(A)$ denotes the spectral radius of $A$, $\|\cdot\|$ denotes
the norm induced by $(\cdot, \cdot)$,
the notation
$(\cdot, \cdot)_A:=(A\cdot, \cdot)$ 
and $[\cdot, \cdot]_{A_0} := [A_0\cdot, \cdot]$, 
and $\kappa(A)$ is the condition number of the operator $A$.


The focus of the next section is on the 
application of Theorems \ref{thm:asp} to construct 
optimal auxiliary space preconditioners for the three HDG schemes
under consideration. Before proceeding to the details, 
we make a remark below on steps in the ASP
construction for the scalar and vectorial reaction-diffusion case.
The ASP construction for the biharmonic case follows a different route, whose
discussion is postponed to Subsection \ref{sec:bh}.
\begin{remark}[ASP for HDG schemes for reaction-diffusion systems]
  \label{rk1}
  An HDG scheme consists of two sets of degrees of freedom (DOFs):
  the {\it local} DOFs inside the mesh elements 
  that can be statically condensed out, and the {\it global} DOFs on the mesh
  skeleton which shall form a global linear system to be solved.
  We denote the finite element space related to the global DOFs as $V_h$, 
  and the associated HDG bilinear form on $V_h$ as $a_h(\cdot, \cdot)$.
  The construction of ASP
  for the HDG schemes for scalar and vectorial reaction-diffusion equations 
  is performed as follows:
  \begin{itemize}
    \item [\textbf{(i)}] 
      Define an $L^2$-like inner-product $(\cdot, \cdot)_{0,h}$ 
for the global HDG space $V_h$, and estimate the
spectral radius  $\rho_{A_h}$ of the condensed HDG operator 
  $A_h:V_h\rightarrow V_h$ where 
  $$
  (A_hu, v)_{0,h} = a_h(u, v)\quad \forall u,v\in V_h.
  $$
\item [\textbf{(ii)}] Prove the simple smoother $R_h$ given 
  by the point Jacobi method satisfies \eqref{asp-as1}. 
  In practical implementation, we also use the slightly more sophisticated 
  block Gauss-Seidel method 
  for the smoother to reduce PCG iterations for
  convergence.
\item [\textbf{(iii)}] Use the continuous piecewise linear finite element space $V_{h,0}$
  as the auxiliary space, and denote the associated bilinear form 
  for the  reaction-diffusion operator 
  as 
  $a_{h,0}(\cdot,\cdot)$,
  with the corresponding linear operator $A_{h,0}$
  given as follows 
  $$
  [A_{h,0}u_0, v_0] = a_{h,0}(u_0, v_0)\quad \forall u_0,v_0\in V_{h,0},
  $$
  where $[\cdot, \cdot]$ is the usual $L^2$-norm on $V_{h,0}$.
  Construct a robust preconditioner $B_{h,0}$
  for the operator $A_{h,0}$ such that the estimate
  \eqref{asp-as2} holds.  
\item [\textbf{(iv)}] Construct the operators $\Pi_h: V_{h,0}\rightarrow V_{h}$
  and 
$P_h: V_{h}\rightarrow V_{h,0}$
  such that 
  the stability properties  \eqref{asp-as3}--\eqref{asp-as4} and the approximation property 
\eqref{asp-as5} holds.
\item [\textbf{(v)}] Apply Theorem \ref{thm:asp} to conclude the 
  optimality of the
  preconditoiner 
$  B_h = R_h+\Pi_h\, B_{h,0}\,\Pi_h^t
$
  for the operator $A_h$.
  \end{itemize}
\end{remark}

\section{ASP for HDG schemes}
\label{sec:hdg}
\subsection{Preliminaries and finite element spaces}
\label{sec:rd}
We consider a convex polygonal/polyhedral domain $\Omega \subset \mathbb{R}^d, d=2,3$.
The restrictions on the domain are only used for theoretical analysis. In
practical implementation, we can work with more general non-convex or curved domains.

Let $\Th$ be a shape-regular, quasi-uniform conforming simplicial triangulation of the domain
$\Omega$. 
For any element $K\in \Th$, we denote by $h_K$ its diameter, and
by $h$ the maximum diameter over all mesh elements.
We denote $\Eh$ as the set of facets (edges in 2D, faces in 3D)
on the mesh $\Th$, which we also refer to as the {\it mesh skeleton}.
We split $\Eh$ into the collection of boundary facets 
$\Eh^\partial=\{F\in \Eh: \; F\subset \partial\Omega \}$, 
and that of the interior facets $\Eh^o=\Eh\backslash \Eh^\partial$.
Given a simplex $S\subset \mathbb{R}^d, d=1,2,3$, we denote 
$\mathcal{P}^m(S)$, $m\ge 0$, as the space of polynomials of degree at most
$m$. 
Given a facet $F\in \Eh$ with normal direction $\bld n$, we denote 
$\mathsf{tang}(\bld w):=\bld w-(\bld w\cdot\bld n)\bld n$ as the {\it
tangential component} of a vector field $\bld w$. 

The following finite element spaces will be used to construct the HDG scheme
and ASP for the scalar reaction-diffusion equation:
\begin{subequations}
  \label{spaces-rd}
\begin{align}
  \label{l2v}
    W_h^{r}:=&\;\{w\in L^2(\Omega): \;\; w|_K\in
  \mathcal{P}^r(K), \;\forall K\in \Th\}, \\ 
  \label{l2f}
      \widehat{W}_h^{r}:=&\;\{\widehat{w}\in L^2(\Eh): \;\; \widehat{w}|_F\in
  \mathcal{P}^r(F), \;\forall F\in \Eh\}, \\ 
  \label{h1}
        \mathcal{W}_h^1:=&\;\{w\in H^1(\Omega): \;\; w|_K\in
  \mathcal{P}^1(K), \;\forall K\in \Th\}. 
\end{align}
where $r\ge 0$ is the polynomial degree.
We further denote the subspaces with vanishing boundary conditions: 
\begin{align}
  \label{l2f0}
  \widehat{W}_{h,0}^{r}:=&\;\{\widehat{w}\in 
   \widehat{W}_{h}^{r}: \;\; \widehat{w}|_F=0, \;\forall F\in \Eh^\partial\},
   \;\;\;\;
  \mathcal{W}_{h,0}^{1}:=\;\{{w}\in 
\mathcal{W}_{h}^{1}: \;\; {w}|_{\partial\Omega}=0\}.
\end{align}
\end{subequations}

The following finite element spaces will be used to construct the 
divergence-conforming HDG scheme
and ASP for the vectorial reaction-diffusion equation:
\begin{subequations}
  \label{spaces-vrd}
\begin{align}
  \label{vl2v}
  \bld V_h^{r}:=&\;\{\bld v\in H(\mathrm{div};\Omega): \;\; \bld v|_K\in
  [\mathcal{P}^r(K)]^d, \;\forall K\in \Th\}, \\ 
  \label{vl2f}
        \widehat{\bld V}_h^r:=&\;\{\widehat{\bld v}
          \in [L^2(\Eh)]^d: \;\;\widehat{\bld v}|_F\in
        [\mathcal{P}^r(F)]^d,\;\;\widehat{\bld v}\cdot \bld n|_F=0, 
      \;\forall F\in \Eh\}, \\ 
  \label{vh1}
          \bld{\mathcal{V}}_h^1:=&\;\{\bld v\in [H^1(\Omega)]^d: \;\; \bld v|_K\in
          [\mathcal{P}^1(K)]^d, \;\forall K\in \Th\}. 
\end{align}
We further denote the following subspaces with vanishing boundary conditions:
\begin{align}
  \label{vl2v0}
  \bld{V}_{h,0}^{r}:=&\{\bld{v}\in 
  \bld{V}_{h}^{r}: \; \bld{v}\cdot\bld n|_{\partial\Omega}=0
\},\\
  \label{vl2f0}
    \widehat{\bld{V}}_{h,0}^{r}:=&\{\widehat{\bld{v}}\in 
      \widehat{\bld{V}}_{h}^{r}: \; \mathsf{tang}(\widehat{\bld{v}})|_{F}=0,
      \forall F\in\Eh^\partial
\},\\
  \label{vh10}
      \bld{\mathcal{V}}_{h,0}^{1}:=&\;\{{\bld v}\in 
      \bld{\mathcal{V}}_{h}^{1}: \;\; {\bld v}|_{\partial\Omega}=0\}.
\end{align}
Moreover, we shall also use a constraint subspace of $\bld V_h^r$ 
whose elements has a piecewise constant divergence field:
\begin{align}
  \label{vl2vc}
  \bld{V}_{h}^{r, \mathsf{cst}}:=&\{\bld{v}\in 
    \bld{V}_{h}^{r}: \; \nabla\cdot \bld{v}|_{K}\in \mathcal{P}^0(K),
    \forall K\in \Th\},\\
  \label{vl2vc0}
  \bld{V}_{h,0}^{r, \mathsf{cst}}:=&\{\bld{v}\in 
    \bld{V}_{h,0}^{r,\mathsf{cst}}: \; \bld{v}\cdot \bld n|_{\partial\Omega}=0
  \}.
  \end{align}
  
  We perform a {\it hierarchical} basis splitting for the divergence-conforming 
  space $\bld V_h^r$ (and $\bld V_h^{r,\mathsf{cst}}$) 
  as was done in \cite[Chapter 5]{Zaglmayr}, see
  also \cite[Section 2.2.4]{Lehrenfeld10},  to facilitate the discussion on static condensation.
  The following basis splitting was presented in \cite[Table
  2.1]{Lehrenfeld10}\footnote{The basis splitting in \cite{Lehrenfeld10}
  is only performed on triangular meshes, 
similar results on other element shapes were documented in \cite{Zaglmayr}.
These hierarchical bases are readily available in the 
NGSolve software\cite{Schoberl16, ngsolve}, which we use in
our numerical simulations.
}:
\begin{align}
  \label{hdiv-split}
  \bld V_h^r = 
  \bigoplus_{\tiny F\in\Eh} \mathrm{span}\{\bld \phi_F^0\}
\oplus 
\!\!\!\!\! 
\!\!\!\!\! 
  \bigoplus_{
    \tiny
    \begin{tabular}{c}
    $F\in\Eh$\\
    $i=1,\cdots, n_f^r$
  \end{tabular}
}
\!\!\!\!\! 
\!\!\!\!\! 
\mathrm{span}\{\bld \phi_F^i\}
\oplus
\!\!\!\!\! 
\!\!\!\!\! 
\bigoplus_{
    \tiny
    \begin{tabular}{c}
    $K\in\Th$\\
    $i=1,\cdots, n_k^{r,1}$
  \end{tabular}
}
\!\!\!\!\! 
\!\!\!\!\! 
\mathrm{span}\{\bld \phi_K^{i}\}
\oplus 
\!\!\!\!\! 
\!\!\!\!\! 
\bigoplus_{
    \tiny
    \begin{tabular}{c}
    $K\in\Th$\\
    $i=1,\cdots, n_k^{r,2}$
  \end{tabular}
} 
\!\!\!\!\! 
\!\!\!\!\! 
\mathrm{span}\{\bld \psi_K^{i}\},
\end{align}
where $\bld \phi_F^0$ is the (global) lowest-order basis for
the lowest-order Raviart-Thomas space whose normal component is only supported
on the facet $F$,
$\bld \phi_F^i$,
$i=1,\cdots, n_f^r$, 
is the 
(global) higher order divergence-free facet bubble basis whose normal component 
is only supported on the facet $F$, 
$\bld \phi_K^i$, 
$i=1,\cdots, n_k^{r,1}$,
is the (local)
higher order divergence-free element bubble 
basis 
that is supported only on the element $K$
whose normal component vanishes on the mesh skeleton $\Eh$ and 
$\bld \psi_K^i$, $i=1,\cdots, n_k^{r,2}$, is the (local)
higher order element bubble basis
with a non-zero divergence
that is supported only on the element $K$
whose normal component vanishes on the mesh skeleton $\Eh$.
Here the integer numbers $n_f^r$, $n_k^{r,1}$,
and $n_k^{r,2}$ are the number of basis functions per facet/element for each group
of basis functions, whose specific values 
can be found in \cite{Zaglmayr} which are not relevant in our discussion.
With this basis splitting, we now define the {\it global} and {\it local}
subspaces of $\bld V_h^r$: 
\begin{align}
  \label{hdiv-split-0}
  \bld V_h^{r,\partial} := &\;
  \bigoplus_{\tiny F\in\Eh} \mathrm{span}\{\bld \phi_F^0\}
\oplus 
\!\!\!\!\! 
\!\!\!\!\! 
  \bigoplus_{
    \tiny
    \begin{tabular}{c}
    $F\in\Eh$\\
    $i=1,\cdots, n_f^r$
  \end{tabular}
}
\!\!\!\!\! 
\!\!\!\!\! 
\mathrm{span}\{\bld \phi_F^i\}\\
  \label{hdiv-split-1}
  \bld V_h^{r,o} :=&\; 
\!\!\!\!\! 
\!\!\!\!\! 
\bigoplus_{
    \tiny
    \begin{tabular}{c}
    $K\in\Th$\\
    $i=1,\cdots, n_k^{r,1}$
  \end{tabular}
}
\!\!\!\!\! 
\!\!\!\!\! 
\mathrm{span}\{\bld \phi_K^{i}\},
\;\;\;\;\;
\;\;\;\;\;
  \bld V_h^{r,o*} :=\; 
  \bld V_h^{r,o}  
\oplus 
\!\!\!\!\! 
\!\!\!\!\! 
\bigoplus_{
    \tiny
    \begin{tabular}{c}
    $K\in\Th$\\
    $i=1,\cdots, n_k^{r,2}$
  \end{tabular}
} 
\!\!\!\!\! 
\!\!\!\!\! 
\mathrm{span}\{\bld \psi_K^{i}\}.
\end{align}
We further denote $\bld V_{h,0}^{r,\partial}$ as the 
subspace of $\bld V_{h}^{r,\partial}$ whose normal component vanishes on 
the domain boundary $\partial \Omega$.
Finally, we have the following direct decomposition of the divergence-conforming 
spaces that will be used in static condensation:
\begin{align}
  \label{vfe}
  \bld V_h^r =  \bld V_h^{r,\partial} \oplus
  \bld V_h^{r,o*},\;\;\;
  \bld V_h^{r,\mathsf{cst}} =  \bld V_h^{r,\partial} \oplus
  \bld V_h^{r,o}.
\end{align}
\end{subequations}

The following additional high-order $H^1$-conforming 
finite element space (in two dimensions) 
will be used to construct the $C^0$-continuous interior penalty HDG scheme
for the generalized biharmonic equation: 
  \begin{align}
  \label{bh-space-1}
  X_h^r:=&\;\{\phi\in H^1(\Omega): \;\; \phi|_K\in
  \mathcal{P}^r(K), \;\forall K\in \Th\}.
  \end{align}
  We further denote $X_{h,0}^r$ as the subspace of $X_h^r$ with vanishing
  boundary conditions on the domain boundary $\partial\Omega$.
  Denote $V_{h,0}^{r,\mathrm{zero}}$ as the divergence-free subspace of 
  $V_{h,0}^{r}$,
  it is well-known that in two dimensions, the following equality holds:
  \[
    V_{h,0}^{r, \mathrm{zero}}  =  \{\vv{\nabla} \times \phi:\;
    \phi\in X_{h,0}^{r+1}\},
  \]
  where $\vv{\nabla}\times \phi =(\partial_y \phi, -\partial_x \phi)$ is the curl operator
  in two dimensions.

To simplify notation, 
for any function $\phi\in L^2(S)$
we denote $\|\phi\|_S$ as the $L^2$-norm of $\phi$ on the domain 
$S$, and for two positive numbers $A, B\in \mathbb{R}^+$, we denote $A\lesssim B$ to indicate that there exists
a generic positive constant $C$ that is only dependent on the 
shape regularity of the mesh $\Th$ and the polynomial degree of the
finite element space under consideration such that 
$A\le C B$. In particular, the hidden constant $C$ is independent of the mesh size $h$
and parameters in the PDEs. Furthermore, we denote $A \simeq B$ to indicate
that $A\lesssim B$ and $B\lesssim A$.

\subsection{Symmetric interior penalty HDG for reaction-diffusion}
\label{sec:rd}
\subsubsection{The model and the HDG scheme}
We consider the 
following constant-coefficient reaction-diffusion equation with a homogeneous Dirichlet 
boundary condition:
\begin{align}
  \label{rd-eq}
  -\triangle u + \tau u = f \;\; \text{in } \Omega, \quad\quad 
u|_{\partial \Omega}=0
\end{align}
where $\tau\ge 0$ is the non-negative reaction coefficient.
Here we focus on the analysis for this constant coefficient case. 
Variable coefficients will be covered in the numerical experiments only. 

We apply the symmetric interior penalty HDG scheme with projected jumps 
\cite[Remark 1.2.4]{Lehrenfeld10}
to discretize the equation \eqref{rd-eq}. Given a polynomial degree $k\ge 1$, 
the HDG scheme reads as follows:
find $(u_h, \widehat u_h)\in W_h^k\times \widehat{W}_{h,0}^{k-1}$ such that 
  \begin{align}
  \label{hdg-rd}
  a_h\left((u_h, \widehat{u}_h), (v_h, \widehat{v}_h)
    \right) = \int_{\Omega} f\,v_h\mathrm{dx},\quad
    \forall (v_h, \widehat{v}_h)\in W_h^{k}\times \widehat{W}_{h,0}^{k-1},
  \end{align}
where the bilinear form 
\begin{align*}
  a_h((u_h, \widehat{u}_h), (v_h, \widehat{v}_h))
  := \sum_{K\in\Th}&\; \int_{K}(\nabla u_h\cdot\nabla v_h+\tau u_hv_h)\mathrm{dx}
  -\int_{\partial K} \nabla u_h\cdot \bld n(v_h-\widehat v_h)\mathrm{ds}\\
  &\!\!\!\!\!\!\!\!\!\!\!\!\!\!\!\!\!\!
  -\int_{\partial K} \nabla v_h\cdot \bld n(u_h-\widehat u_h)\mathrm{ds}
  +\int_{\partial K}\frac{\alpha k^2}{h} (P_{k-1} u_h-\widehat u_h)(P_{k-1}
  v_h-\widehat v_h)\mathrm{ds},
\end{align*}
with $P_{k-1}$ denoting the $L^2$-projection into the space of facet-piecewise
polynomials of degree $k-1$. Here the stability parameter $\alpha$ is 
chosen big enough to ensure the following 
coercivity result:
\begin{align}
\label{hdg-rd-co}
  \|(u_h, \widehat u_h)\|_{1,h}^2  \lesssim  
  a_h((u_h, \widehat{u}_h), (u_h, \widehat{u}_h)),
\end{align} 
where 
\[
  \|(u_h, \widehat u_h)\|_{1,h}^2:=\sum_{K\in\Th}(\|\nabla u_h\|_K^2
  +\tau \|u_h\|_K^2
  +\frac{1}{h}\|u_h-\widehat{u}_h\|_{\partial K}^2)
\]
is a norm on $W_h^k\times \widehat{W}_{h,0}^{k-1}$.
A lower bound on $\alpha$ that guarantees coercivity was presented 
in \cite[Lemma 1]{AinsworthFu18}. In practice, taking
$\alpha = 4$ ensures \eqref{hdg-rd-co}, which is the value we use  
in the numerical simulations.
On the other hand, the Cauchy-Schwarz inequality and inverse inequality
ensure that 
$a_h((u_h, \widehat{u}_h), (u_h, \widehat{u}_h))\lesssim \|(u_h, 
\widehat{u}_h\|_{1,h}^2 $ for $(u_h, \widehat{u}_h)\in W_h^k\times 
W_{h,0}^{k-1}$. Hence, the bilinear form $a_h$ induces a norm 
that is equivalent to $\|\cdot \|_{1,h}$:
\begin{align}
  \label{hdg-rd-norm}
  a_h((u_h, \widehat{u}_h), (u_h, \widehat{u}_h))
  \simeq \|(u_h, \widehat u_h)\|_{1,h}^2, \quad\forall 
(u_h, \widehat{u}_h)\in \Wh\times \What.
\end{align} 
\subsubsection{Static condensation}
When implementing the scheme \eqref{hdg-rd}, 
static condensation is performed locally in the element-level 
to eliminate DOFs associated with 
the {\it local} space $W_h^k$, which results in a global {\it condense} linear system for 
DOFs associated with the {\it global} space $\widehat{W}_{h,0}^{k-1}$ only.
The main subject of this subsection is to construct an ASP method 
for this condensed system. To proceed, we first give a
characterization of the condensed  bilinear form.

Denoting the following lifting operator 
$\mathcal{L}_h:\widehat{W}_{h,0}^{k-1}\rightarrow W_h^k$:
given $\mu\in \widehat{W}_{h,0}^{k-1}$, 
$\mathcal{L}_h(\mu)$ is the unique function in $W_h^k$ such that
\begin{align}
  \label{rd-L1}
  a_h((\mathcal{L}_h(\mu), 0), (v_h,0)) = 
  -a_h((0, \mu), (v_h,0)) 
  ,\;\;\;\; \forall v_h\in W_h^k.
\end{align}
Furthermore, let $\mathcal{L}_h^f$ be the unique function in $W_h^k$
such that 
\begin{align}
  \label{rd-L2}
  a_h((\mathcal{L}_h^f, 0), (v_h,0)) = 
  \int_\Omega f v_h\mathrm{dx},\;\; \forall v_h\in W_h^k.
\end{align}
Well-posedness of the above linear systems easily follows from the coercivity result 
\eqref{hdg-rd-co}.

\begin{lemma}[Characterization of the condensed system for \eqref{hdg-rd}]
  \label{lemma:rd}
  Let $(u_h,\widehat u_h)\in W_h^k\times \widehat W_{h,0}^{k-1}$
be the unique solution 
  to the HDG scheme \eqref{hdg-rd}.
  Then,
  \begin{subequations}
  \begin{align}
    \label{rd-uh}
    u_h = &\;\mathcal{L}_h(\widehat u_h)+\mathcal{L}_h^f, 
  \end{align}
  and $\widehat u_h$ is the unique function in  $\What$ 
  such that 
  \begin{align}
    \label{rd-uhat}
   a_h\left((\mathcal{L}_h(\widehat u_h), \widehat u_h), 
    (\mathcal{L}_h(\widehat v_h), \widehat v_h)\right)
    = 
    -a_h((\mathcal{L}_h^f, 0), 
    (\mathcal{L}_h(\widehat v_h), \widehat v_h)),\quad \forall \widehat v_h\in \widehat W_{h,0}^{k-1}.
  \end{align}
  In particular, the reduced system \eqref{rd-uhat} is symmetric and positive
  definite, and 
  \begin{align}
    \label{norm-eq}
   a_h\left((\mathcal{L}_h(\mu), \mu), 
    (\mathcal{L}_h( \mu), \mu)\right)
    \simeq 
\|(\mathcal{L}_h(\mu), \mu)\|_{1,h}^2,\quad \forall \mu\in \What.
  \end{align}
  \end{subequations}
\end{lemma}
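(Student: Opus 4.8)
The plan is to carry out the static condensation of \eqref{hdg-rd} at the variational level, exploiting the local--global block structure of $a_h$ together with the fact that the two liftings in \eqref{rd-L1}--\eqref{rd-L2} invert the local block. The only analytic input needed is coercivity: restricting the norm to pairs with vanishing skeleton component, $\|(v_h,0)\|_{1,h}^2=\sum_{K\in\Th}(\|\nabla v_h\|_K^2+\tau\|v_h\|_K^2+\tfrac1h\|v_h\|_{\partial K}^2)$ is a genuine norm on $\Wh$ (if it vanishes then $v_h$ is elementwise constant with zero trace, hence $v_h=0$), so by \eqref{hdg-rd-co} the local form $(v_h,w_h)\mapsto a_h((v_h,0),(w_h,0))$ is SPD and in particular invertible. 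This is what makes \eqref{rd-L1}--\eqref{rd-L2} well posed, and it is used repeatedly below.

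For \eqref{rd-uh} I would test \eqref{hdg-rd} against $(v_h,0)$ with $v_h\in\Wh$ and split the first argument by bilinearity to obtain $a_h((u_h,0),(v_h,0))=\int_\Omega f v_h\,\mathrm{dx}-a_h((0,\widehat u_h),(v_h,0))$. By the defining relations of $\mathcal L_h^f$ and $\mathcal L_h(\widehat u_h)$, the right-hand side equals $a_h((\mathcal L_h^f+\mathcal L_h(\widehat u_h),0),(v_h,0))$; since $v_h$ is arbitrary and the local form is SPD, the difference $u_h-\mathcal L_h(\widehat u_h)-\mathcal L_h^f$ lies in its kernel and hence vanishes, giving \eqref{rd-uh}.

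The reduced equation \eqref{rd-uhat} rests on the discrete-harmonic orthogonality obtained by rewriting \eqref{rd-L1} as $a_h((\mathcal L_h(\mu),\mu),(v_h,0))=0$ for all $v_h\in\Wh$; that is, every lifted pair is $a_h$-orthogonal to the whole local space. I would then test \eqref{hdg-rd} against $(0,\widehat v_h)$ (whose load contribution vanishes), insert $u_h=\mathcal L_h(\widehat u_h)+\mathcal L_h^f$, and finally add the orthogonal---hence vanishing---term $a_h((\mathcal L_h(\widehat u_h),\widehat u_h),(\mathcal L_h(\widehat v_h),0))$ to symmetrize the left-hand side into $a_h((\mathcal L_h(\widehat u_h),\widehat u_h),(\mathcal L_h(\widehat v_h),\widehat v_h))$. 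This reorganizes the global residual into \eqref{rd-uhat}, whose right-hand side is the residual of the particular lifting $\mathcal L_h^f$ against the skeleton test function.

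It remains to record symmetry, definiteness and \eqref{norm-eq} for the reduced form $\widehat a_h(\mu,\nu):=a_h((\mathcal L_h(\mu),\mu),(\mathcal L_h(\nu),\nu))$. Symmetry follows from that of $a_h$ and the linearity of $\mu\mapsto\mathcal L_h(\mu)$; the norm equivalence \eqref{norm-eq} is simply \eqref{hdg-rd-norm} applied to $(\mathcal L_h(\mu),\mu)\in\Wh\times\What$, and for $\mu\neq0$ the skeleton component is nonzero so $\|(\mathcal L_h(\mu),\mu)\|_{1,h}>0$, which gives strict positivity and hence that \eqref{rd-uhat} is SPD and uniquely solvable. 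The one step needing care is the symmetrization above: one must use both the symmetry of $a_h$ and the orthogonality $a_h((\mathcal L_h(\mu),\mu),(v_h,0))=0$ to see that lifting the skeleton test function leaves the equation unchanged, so that the condensed operator is exactly the symmetric Schur complement $\widehat a_h$. Everything else is routine bookkeeping.
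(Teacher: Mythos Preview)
Your argument is correct and follows essentially the same route as the paper's: test \eqref{hdg-rd} against purely local pairs $(v_h,0)$ to obtain \eqref{rd-uh} via the definitions of the two liftings, then test against skeleton functions to get the condensed system, and read off \eqref{norm-eq} directly from \eqref{hdg-rd-norm}. The only cosmetic difference is that the paper tests directly with the lifted pair $(\mathcal L_h(\widehat v_h),\widehat v_h)$ and ``reorders terms'', whereas you test with $(0,\widehat v_h)$ and then add back the $a_h$-orthogonal contribution $a_h((\mathcal L_h(\widehat u_h),\widehat u_h),(\mathcal L_h(\widehat v_h),0))=0$; these are equivalent manipulations, and your version makes the Schur-complement structure and the role of the discrete-harmonic orthogonality more explicit than the paper's terse proof.
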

\begin{proof}
  Taking test functions $(v_h, 0)$ in the HDG scheme \eqref{hdg-rd}, 
  we immediately obtain
  the equality \eqref{rd-uh} from the definitions
  \eqref{rd-L1}--\eqref{rd-L2} and linearity of the bilinear form 
  $a_h$.
  Taking test functions $(\mathcal L_h(\widehat v_h), \widehat v_h)$
  in \eqref{hdg-rd} and reordering terms, we obtain the equation 
\eqref{rd-uhat}.
Finally, the equivalence \eqref{norm-eq} is a consequence of that for the
operator  $a_h$ in \eqref{hdg-rd-norm}.
\end{proof}

\subsubsection{The auxiliary space preconditioner}
Now, we follow Remark \ref{rk1} to construct the auxiliary space perconditioner 
for the reduced HDG system \eqref{rd-uhat}.

\textbf{(i)} We denote the following $L^2$-like inner product on $\What$: 
\begin{align}
  \label{rd-l2}
  (\lambda, \mu)_{0,h}:= 
  \sum_{K\in\Th}h\int_{\partial K}\lambda\, \mu\mathrm{ds}=
  \sum_{F\in\Eh^o}h
  \int_{F}\lambda\,\mu\mathrm{ds}
  ,\quad \forall \lambda,\mu\in\What,
\end{align}
whose induced norm is denoted as $\|\mu\|_{0,h}^2:=(\mu, \mu)_{0,h}$.
We define the condensed HDG operator $A_h:\What\rightarrow\What$ as 
\begin{align}
  \label{rd-ah}
  (A_h \widehat u_h, \widehat v_h)_{0,h}:=
   a_h\left((\mathcal{L}_h(\widehat u_h), \widehat u_h), 
    (\mathcal{L}_h( \widehat v_h), \widehat v_h)\right), 
    \quad \forall \widehat u_h, \widehat v_h\in\What.
\end{align}
The following result gives an estimation of the 
spectral radius of $A_h$ and its condition number.
\begin{lemma}[Spectral radius and condition number]
  \label{lemma:rd1}
  Let $A_h$ be the operator given in \eqref{rd-ah}. Then,
  there holds $\rho_{A_h}\simeq h^{-2}$ and 
  $\kappa (A_h) \lesssim \frac{h^{-2}}{\min\{\tau+1,\, h^{-2}\}}$.
\end{lemma}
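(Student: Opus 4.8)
The plan is to express $\rho_{A_h}$ and $\kappa(A_h)$ through the Rayleigh quotient of a minimal-energy extension, and then to estimate that quotient, the only genuinely global ingredient being a discrete Poincar\'e--Friedrichs inequality. First I would record a variational characterization of $A_h$. Since $(\mathcal{L}_h(\mu),\mu)$ is the Galerkin minimizer in its first slot, i.e. $a_h((\mathcal{L}_h(\mu),\mu),(v_h,0))=0$ for all $v_h\in\Wh$ by \eqref{rd-L1}, the definition \eqref{rd-ah} and the norm equivalence \eqref{hdg-rd-norm} give, with constants independent of $h$ and $\tau$,
\begin{align*}
  (A_h\mu,\mu)_{0,h}
  =\min_{w_h\in\Wh} a_h((w_h,\mu),(w_h,\mu))
  \simeq\min_{w_h\in\Wh}\|(w_h,\mu)\|_{1,h}^2
  =\sum_{K\in\Th}S_K(\mu),
\end{align*}
where $S_K(\mu):=\min_{w\in\mathcal{P}^k(K)}\big(\|\nabla w\|_K^2+\tau\|w\|_K^2+\tfrac1h\|w-\mu\|_{\partial K}^2\big)$; the last equality holds because the minimization decouples over elements, each $w_h|_K$ seeing only $\mu|_{\partial K}$. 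As $\|\mu\|_{0,h}^2=h\sum_{K\in\Th}\|\mu\|_{\partial K}^2$, it suffices to bound the ratio $\sum_K S_K(\mu)\big/\big(h\sum_K\|\mu\|_{\partial K}^2\big)$ from above for $\rho_{A_h}$ and from below for $\lambda_{\min}(A_h)$, and to use $\kappa(A_h)=\rho_{A_h}/\lambda_{\min}(A_h)$.

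For the spectral radius, taking $w=0$ in each $S_K$ gives $S_K(\mu)\le\tfrac1h\|\mu\|_{\partial K}^2$ and hence $\rho_{A_h}\lesssim h^{-2}$. For the matching lower bound I would take $\mu$ supported on a single interior facet $F_0$; scaling to the reference element reduces $S_K(\mu)\gtrsim\tfrac1h\|\mu\|_{\partial K}^2$ to the $h$-independent estimate $\min_{\hat w}\big(\|\hat\nabla\hat w\|^2+\tau h^2\|\hat w\|^2+\|\hat w-\hat\mu\|_{\partial\hat K}^2\big)\gtrsim\|\hat\mu\|_{\partial\hat K}^2$, which follows from a short dichotomy using that $\big(\|\hat\nabla\hat w\|^2+\|\hat w\|_{\partial\hat K\setminus F_0}^2\big)^{1/2}$ is a norm on $\mathcal{P}^k(\hat K)$. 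This yields $\rho_{A_h}\simeq h^{-2}$.

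For the condition number it remains to prove $\lambda_{\min}(A_h)\gtrsim\min\{\tau+1,h^{-2}\}$, and I distinguish the cases $\tau+1\le h^{-2}$ and $\tau+1\ge h^{-2}$. The discrete trace inequality $\|w\|_{\partial K}^2\lesssim h^{-1}\|w\|_K^2$ together with the penalty term yields two elementwise bounds: $S_K(\mu)\gtrsim\tau h\|\mu\|_{\partial K}^2$ when $\tau\lesssim h^{-2}$, and $S_K(\mu)\gtrsim h^{-1}\|\mu\|_{\partial K}^2$ when $\tau\gtrsim h^{-2}$; summing the latter settles $\tau+1\ge h^{-2}$. The hard part is the complementary regime, and in particular small $\tau$, where the elementwise argument degenerates on the constant mode of $w$: a global constant is invisible to the gradient term and, for $\tau=0$, to the reaction term, so no local estimate can control $\|\mu\|_{0,h}^2$. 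I expect this to be the main obstacle, and I would resolve it by the global discrete Poincar\'e--Friedrichs inequality
\begin{align*}
  \|\mu\|_{0,h}^2\;\lesssim\;\sum_{K\in\Th}\Big(\|\nabla w_h\|_K^2+\tfrac1h\|w_h-\mu\|_{\partial K}^2\Big),\qquad\forall\,w_h\in\Wh,
\end{align*}
which uses the boundary condition $\mu|_{\partial\Omega}=0$ essentially: by the triangle inequality the HDG penalty dominates both the broken-space jumps $\sum_{F\in\Eh^o}h^{-1}\|\jmp{w_h}\|_F^2$ and the boundary traces $\sum_{F\in\Eh^\partial}h^{-1}\|w_h\|_F^2$, so a Brenner-type discrete Poincar\'e--Friedrichs inequality bounds $\|w_h\|_{L^2(\Omega)}^2$ by the right-hand side, after which $\|\mu\|_{0,h}^2\lesssim h\sum_K\|w_h\|_{\partial K}^2+h\sum_K\|w_h-\mu\|_{\partial K}^2$ and the trace inequality close the estimate. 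Adding this to $S_K(\mu)\gtrsim\tau h\|\mu\|_{\partial K}^2$ gives $\sum_K S_K(\mu)\gtrsim(\tau+1)\|\mu\|_{0,h}^2$ for $\tau+1\le h^{-2}$; combining the two regimes yields $\lambda_{\min}(A_h)\gtrsim\min\{\tau+1,h^{-2}\}$ and hence the asserted bound on $\kappa(A_h)$. The remaining steps are routine scaling, trace, and inverse inequalities.
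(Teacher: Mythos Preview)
Your proposal is correct and follows essentially the same approach as the paper: the minimal-energy/variational characterization you write out is exactly what the paper obtains via the Cauchy--Schwarz argument on the lifting \eqref{rd-L1}, the single-facet scaling argument for the lower bound on $\rho_{A_h}$ is the same, and the discrete Poincar\'e--Friedrichs inequality is the identical global ingredient the paper invokes for $\lambda_{\min}(A_h)$. The only cosmetic difference is that the paper handles the lower eigenvalue bound in one stroke via $\min\{\tau+1,h^{-2}\}\|\widehat u_h\|_{0,h}^2\lesssim(\tau+1)\|\mathcal{L}_h(\widehat u_h)\|_{\Omega}^2+h^{-2}\|\mathcal{L}_h(\widehat u_h)-\widehat u_h\|_{0,h}^2$ rather than splitting into the two regimes $\tau+1\lessgtr h^{-2}$, which is slightly shorter but equivalent to your case analysis.
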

\begin{proof}
  First, we prove $\rho_{A_h}\lesssim h^{-2}$.
  By the definition of the operator $A_h$, we have 
  $(A_h \widehat u_h, \widehat u_h)_{0,h}\simeq 
  \|(\mathcal{L}_h(\widehat u_h), 
  \widehat u_h)\|_{1,h}^2.$
  Taking $\mu=\widehat u_h$ and $v_h = \mathcal {L}_h(\widehat u_h)$
  in \eqref{rd-L1} and applying the Cauchy-Schwarz inequality, we have 
  \begin{align*}
  a_h((\mathcal{L}_h(\widehat u_h), 0), (\mathcal{L}_h(\widehat u_h)
  ,0)) = &\; 
  -a_h((0, \widehat u_h), (\mathcal{L}_h(\widehat u_h)
  ,0))\\
  \le \;
  a_h((\mathcal{L}_h(\widehat u_h), 0), &(\mathcal{L}_h(\widehat u_h)
  ,0))^{1/2} 
  a_h((0, \widehat u_h), (0,\widehat u_h))^{1/2}.
  \end{align*}
  Hence, \[
  \|(\mathcal{L}_h(\widehat u_h), 
  0)\|_{1,h}^2
    \simeq
  a_h((\mathcal{L}_h(\widehat u_h), 0), (\mathcal{L}_h(\widehat u_h)
  ,0)) \lesssim 
  a_h((0, \widehat u_h), (0,\widehat u_h))\simeq
  \|(0, 
  \widehat u_h)\|_{1,h}^2.
  \] 
   Invoking the triangle inequality, we get 
  \[
  (A_h \widehat u_h, \widehat u_h)_{0,h}\simeq
  \|(\mathcal{L}_h(\widehat u_h), 
  \widehat u_h)\|_{1,h}^2\lesssim
  \|(0, 
  \widehat u_h)\|_{1,h}^2= h^{-2} \|\widehat u_h\|_{0,h}^2,
  \] 
  where the last equality following from the definition of these norms.
  Since the spectral radius of the SPD operator $A_h$ is 
  its maximum eigenvalue,
  $\lambda_{A_h}^{\max}$, we 
  conclude that
  $\rho_{A_h}=\lambda_{A_h}^{\max}\lesssim h^{-2}$.

  Next, we prove the condition number estimate.
  By inverse and triangle inequalities, we get, for any $\widehat u_h\in
  \What$,
  \begin{align*}
  \min\{\tau+1, h^{-2}\}\|\widehat u_h\|_{0,h}^2
  \lesssim &\;
  (\tau+1) \|\mathcal{L}_h(\widehat u_h)\|_{0,h}^2
+h^{-2} \|\mathcal{L}_h(\widehat u_h)-\widehat u_h\|_{0,h}^2\\
  \lesssim &\;
  (\tau+1) \|\mathcal{L}_h(\widehat u_h)\|_{\Omega}^2
+h^{-2} \|\mathcal{L}_h(\widehat u_h)-\widehat u_h\|_{0,h}^2\\
  \lesssim &\;\|(\mathcal{L}_h(\widehat u_h), \widehat u_h)\|_{1,h}^2
  \simeq 
  (A_h \widehat u_h, \widehat u_h)_{0,h},
  \end{align*}
  where in the last step, we invoked the 
  following discrete Poincar\'e inequality
  \[
   \|\mathcal{L}_h(\widehat u_h)\|_\Omega\lesssim
   \|(\mathcal{L}_h(\widehat u_h), \widehat u_h)\|_{1,h},
  \] 
  which holds on convex domains, see \cite[Lemma 2.1]{Arnold82}.
  Hence, the minimum eigenvalue $\lambda_{A_h}^{\min}$ of $A_h$
  satisfies $\min\{\tau+1, h^{-2}\}\lesssim 
  \lambda_{A_h}^{\min}$. This implies that 
  \[
    \kappa (A_h) = \frac{\lambda_{A_h}^{\max}}{\lambda_{A_h}^{\min}}
    \lesssim
\frac{h^{-2}}{\min\{\tau+1,\, h^{-2}\}}.
  \]

  Finally, we prove $h^{-2}\lesssim \rho_{A_h}$.
Take $\mu$ as a basis function 
of $\What$ that is supported on a single facet $F$, then
  norm equivalence of finite dimensional spaces and the
standard  scaling argument implies that 
  \begin{align}
    \label{rd-scale}
    (A_h \mu, \mu)_{0,h}\simeq h^{-2}\|\mu\|_{0,h}^2.
  \end{align}
  This concludes the proof.
\end{proof}
\begin{remark}[The case with a large reaction parameter]
  \label{rk:3.1}
  When the reaction parameter $\tau\gtrsim h^{-2}$, 
Lemma \ref{lemma:rd1} implies that $\kappa(A_h)\simeq 1$.
In this case, a simple Jacobi preconditioner is already 
an optimal one for $A_h$.
\end{remark}

\textbf{(ii)} 
With the above spectral radius estimate, we now show next that 
the simple Jacobi smoother satisfies \eqref{asp-as1}.

\begin{lemma}[Spectum of the Jacobi preconditioner]
  \label{lemma:rd2}
  Let $\{\phi_F^j: F\in \Eh^o, 1\le j\le N_f\}$ be a set of 
  orthogonal bases for $\What$ 
  with respect to the inner product $(\cdot,\cdot)_{0,h}$.
  Here $\phi_F^j$ is supported only on the facet $F$, and
  $N_f = \binom{k+d-1}{d-1}$ is the number of bases per facet.
  Denote $R_h:\What\rightarrow\What$ as the Jacobi preconditioner
  for $A_h$ associated with this basis set, i.e., the operator 
  $R_h^{-1}$ is  the diagonal component of 
  $A_h$:
  \begin{align}
    \label{jac}
    (R_h^{-1} \widehat u_h, \widehat v_h)_{0,h}=
  \sum_{F\in\Eh^o}\sum_{j=1}^{N_f}\mathsf{u}_F^j\mathsf{v}_F^j
    (A_h\phi_F^j, \phi_F^j)_{0,h},
  \end{align}
  where 
  $\widehat u_h = \sum_{F\in\Eh^o}\sum_{j=1}^{N_f}\mathsf{u}_F^j \phi_F^j$
  and 
  $\widehat v_h = \sum_{F\in\Eh^o}\sum_{j=1}^{N_f}\mathsf{v}_F^j \phi_F^j$
  with $\mathsf{u}_F^j\in\mathbb{R}$ and $\mathsf{v}_F^j\in\mathbb{R}$ being the
  basis coefficients of $\widehat u_h$ and $\widehat v_h$, respectively.
  Then, the following equivalence holds:
  \[
    (R_h \widehat u_h, \widehat u_h)_{0,h}\simeq \rho_{A_h}^{-1}
    (\widehat u_h, \widehat u_h)_{0,h}.
  \] 
\end{lemma}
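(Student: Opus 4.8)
The plan is to exploit the fact that the prescribed basis $\{\phi_F^j\}$ is orthogonal with respect to $(\cdot,\cdot)_{0,h}$, so that both $R_h^{-1}$ and its inverse $R_h$ act diagonally in this basis. First I would record the explicit diagonal action: from the definition \eqref{jac} one checks that $R_h^{-1}\phi_F^j = \frac{(A_h\phi_F^j,\phi_F^j)_{0,h}}{\|\phi_F^j\|_{0,h}^2}\,\phi_F^j$, and hence $R_h\phi_F^j = \frac{\|\phi_F^j\|_{0,h}^2}{(A_h\phi_F^j,\phi_F^j)_{0,h}}\,\phi_F^j$. With this, the whole estimate reduces to a \emph{uniform} two-sided comparison of each diagonal entry $(A_h\phi_F^j,\phi_F^j)_{0,h}$ with $\rho_{A_h}\|\phi_F^j\|_{0,h}^2$.

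Second, I would establish that comparison. The upper bound is immediate from the Rayleigh quotient characterization of the spectral radius: since $A_h$ is SPD and $\rho_{A_h}$ is its largest eigenvalue, $(A_h\phi_F^j,\phi_F^j)_{0,h}\le \rho_{A_h}\|\phi_F^j\|_{0,h}^2$. The lower bound is exactly the content of the scaling estimate \eqref{rd-scale} established in Lemma \ref{lemma:rd1}: because each $\phi_F^j$ is a single-facet basis function, $(A_h\phi_F^j,\phi_F^j)_{0,h}\simeq h^{-2}\|\phi_F^j\|_{0,h}^2$, and combining this with $\rho_{A_h}\simeq h^{-2}$ from the same lemma yields $(A_h\phi_F^j,\phi_F^j)_{0,h}\gtrsim \rho_{A_h}\|\phi_F^j\|_{0,h}^2$. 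Together these give $(A_h\phi_F^j,\phi_F^j)_{0,h}\simeq \rho_{A_h}\|\phi_F^j\|_{0,h}^2$ with hidden constants independent of $F$, $j$, $h$ and $\tau$.

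Third, I would expand $(R_h\widehat u_h,\widehat u_h)_{0,h}$ in the orthogonal basis. Writing $\widehat u_h = \sum_{F\in\Eh^o}\sum_{j=1}^{N_f}\mathsf{u}_F^j\phi_F^j$ and using the diagonal action of $R_h$ together with orthogonality, I obtain
\[
  (R_h\widehat u_h,\widehat u_h)_{0,h} = \sum_{F\in\Eh^o}\sum_{j=1}^{N_f}(\mathsf{u}_F^j)^2\,\frac{(\|\phi_F^j\|_{0,h}^2)^2}{(A_h\phi_F^j,\phi_F^j)_{0,h}}.
\]
Substituting the comparison from the previous step turns each summand into $\simeq \rho_{A_h}^{-1}(\mathsf{u}_F^j)^2\|\phi_F^j\|_{0,h}^2$, whose sum equals $\rho_{A_h}^{-1}\|\widehat u_h\|_{0,h}^2 = \rho_{A_h}^{-1}(\widehat u_h,\widehat u_h)_{0,h}$ by orthogonality once more. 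This is precisely the claimed equivalence, which verifies \eqref{asp-as1} with constants $\alpha_0,\alpha_1$ independent of $h$ and $\tau$.

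The only genuinely nontrivial ingredient is the uniform lower bound on the diagonal entries, that is, the guarantee that no single-facet basis function has an anomalously small energy $(A_h\phi_F^j,\phi_F^j)_{0,h}$ relative to $\rho_{A_h}$; but this is already delivered by the scaling/norm-equivalence argument behind \eqref{rd-scale}, so the remainder is bookkeeping with the orthogonal basis. The one point I would take care to confirm is that the hidden constants in \eqref{rd-scale} are uniform across all facets and all $N_f$ basis functions per facet, which follows from the quasi-uniformity of the mesh $\Th$ and the fact that $N_f=\binom{k+d-1}{d-1}$ is a fixed finite number.
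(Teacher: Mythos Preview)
Your proposal is correct and follows essentially the same route as the paper: both arguments reduce the claim to the diagonal entries via orthogonality of the basis and then invoke the single-facet scaling estimate \eqref{rd-scale} together with $\rho_{A_h}\simeq h^{-2}$ from Lemma~\ref{lemma:rd1}. The only cosmetic difference is that the paper works with $R_h^{-1}$ and shows $(R_h^{-1}\widehat u_h,\widehat u_h)_{0,h}\simeq \rho_{A_h}\|\widehat u_h\|_{0,h}^2$ directly, whereas you invert first and estimate $(R_h\widehat u_h,\widehat u_h)_{0,h}$; your separate Rayleigh-quotient upper bound is not needed since \eqref{rd-scale} already delivers the two-sided estimate.
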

\begin{proof}
  It suffice to show 
$$
(R_h^{-1} \widehat u_h, \widehat u_h)_{0,h}\simeq \rho_{A_h}
    (\widehat u_h, \widehat u_h)_{0,h}.
 $$
 By the definition \eqref{jac} and the assumption of the bases, we have 
 \begin{alignat*}{2}
(R_h^{-1} \widehat u_h, \widehat u_h)_{0,h}
=&\;\sum_{F\in\Eh^o}\sum_{j=1}^{N_f}(\mathsf{u}_F^j)^2
    (A_h\phi_F^j, \phi_F^j)_{0,h}
    \\
\simeq&\;
h^{-2}
\sum_{F\in\Eh^o}\sum_{j=1}^{N_f}(\mathsf{u}_F^j)^2
\|\phi_F^j\|_{0,h}^2 &&\quad\quad \text{by \eqref{rd-scale}}\\
=&\;
h^{-2}
\|\widehat u_h\|_{0,h}^2
\simeq \rho_{A_h}
\|\widehat u_h\|_{0,h}^2
 &&\quad\quad \text{by basis orthogonality.}\\
\end{alignat*}
This completes the proof.
\end{proof}
\begin{remark}[On the smoother]
The above result also holds if we use other basis functions for $\What$
like the nodal Lagrange basis, where we would have 
\[
\sum_{F\in\Eh^o}\sum_{j=1}^{N_f}(\mathsf{u}_F^j)^2
\|\phi_F^j\|_{0,h}^2
\simeq\; \|\widehat u_h\|_{0,h}^2
\] 
following from the norm equivalence of 
a finite dimensional space and a scaling argument.

This result also implies that the  symmetric Gauss-Seidel 
smoother satisfies \eqref{asp-as1} as it is spectrally equivalent to the 
Jacobi smoother \cite{Zikatanov08}.
In practice, we prefer to use a block version of these smoothers as they
usually lead to better convergence for the PCG algorithm. 
\end{remark}

\textbf{(iii)} 
The auxiliary space under consideration is the continuous piecewise linear
finite element space $\mathcal{W}_{h,0}^1$ in \eqref{l2f0}.
The associated bilinear form $a_{h,0}$ and the linear operator 
$A_{h,0}$
are given as follows:
\begin{align}
  \label{rd-ah0}
  [A_{h,0}u_0, v_0] = a_{h,0}(u_0, v_0) := \int_{\Omega}(\nabla u_0\cdot\nabla v_0+\tau
  u_0v_0)\mathrm{dx},\quad\quad
  \forall u_0,v_0\in \Wc,
\end{align}
where $[\cdot,\cdot]$ is the usual $L^2$-norm on $\Wc$.
The construction of a robust preconditioner $B_{h,0}$ 
for the operator $A_{h,0}$
is a well-studied subject in the literature.
Here we take $B_{h,0}$ to be hypre's BoomerAMG preconditioner \cite{hypre,
Henson02}, which is numerically verified to be robust with respect to 
both mesh size $h$ and reaction parameter $\tau$.

\textbf{(iv)} We take the operator $\Pi_h:\Wc\rightarrow \What$ 
to be the simple $L^2$-projection:
\begin{align}
  \label{rd-pi}
  (\Pi_h u_0, \widehat v_h)_{0,h} = (u_0, \widehat v_h)_{0,h},\quad
  \forall \widehat v_h\in \What,
\end{align}
and the operator $P_h:\What\rightarrow \Wc$ 
to be 
\begin{align}
  \label{rd-p}
P_h(\widehat u_h) =S_1(\mathcal{L}_h(\widehat u_h)),
\end{align}
where $S_1:\Wh\rightarrow \Wc$ is the simple nodal 
average:
given  any $v\in \Wh$,
$S_1(v)\in \Wc$ is defined through its vertex values as follows,
\begin{align}
  \label{s1}
  S_1(v)(\bld x_n) =\left\{ 
  \begin{tabular}{ll}
    $0$ & if $\bld x_n\in\partial \Omega$, \\[1.5ex]
    $
    \frac{1}{\#K_n}\sum_{K\subset K_n}v|_{K}(\bld x_n)
    $ & if  $\bld x_n\not\in\partial \Omega$,
  \end{tabular}
\right.
\end{align}
where $\bld x_n$ is a vertex of the mesh $\Th$, 
$K_n$ is the set of elements that have a vertex at the point $\bld x_n$,
and $\# K_n$ denotes its cardinality.

The following result shows that $\Pi_h$ and $P_h$ satisfies both the stability 
and approximation properties needed to apply the ASP theory, Theorem
\ref{thm:asp}.
\begin{lemma}[Properties of $\Pi_h$ and $P_h$]
  \label{lemma:rd3}
  Let the operator $\Pi_h$ be given in \eqref{rd-pi} and 
  $P_h$ be given in \eqref{rd-p}. Then
  the following inequalities holds
\begin{subequations}
  \label{rd-proj}
  \begin{align}
  \label{rd-proj1}
    \|\Pi_h u_0\|_{A_h}\lesssim &\;\|u_0\|_{A_{h,0}},\\
  \label{rd-proj2}
    \|P_h \widehat u_h\|_{A_{h,0}}\lesssim &\;\|\widehat u_h\|_{A_{h}},\\
  \label{rd-proj3}
    \|\widehat u_h-\Pi_h P_h \widehat u_h\|_{0,h}\lesssim &\;
    \rho_{A_h}^{-1/2}
    \|\widehat u_h\|_{A_{h}},
  \end{align}
\end{subequations}
  where 
  $$
  \|\widehat v_h\|_{A_h}^2:= 
  a_h((\mathcal{L}_h(\widehat v_h), \widehat v_h),
  (\mathcal{L}_h(\widehat v_h), \widehat v_h)),\;\;\text{and} \;\;
  \|v_0\|_{A_{h,0}}^2:= 
  a_{h,0}(v_0, v_0).
  $$
\end{lemma}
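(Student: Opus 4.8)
The plan is to establish the three bounds by combining the energy-minimization property of the lifting $\mathcal{L}_h$ with the stability and approximation properties of the nodal averaging operator $S_1$, all measured through the equivalences \eqref{hdg-rd-norm} and \eqref{norm-eq}. Two auxiliary facts will be used repeatedly. First, the Galerkin orthogonality \eqref{rd-L1} shows that $(\mathcal{L}_h(\mu),\mu)$ is $a_h$-orthogonal to $(\Wh,0)$, so $a_h((\mathcal{L}_h(\mu),\mu),(\mathcal{L}_h(\mu),\mu))\le a_h((w_h,\mu),(w_h,\mu))$ for every $w_h\in\Wh$; thus $\|\mu\|_{A_h}^2\simeq\|(\mathcal{L}_h(\mu),\mu)\|_{1,h}^2\lesssim\|(w_h,\mu)\|_{1,h}^2$. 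Second, for $w_h\in\Wh$ the interior-facet jumps are controlled by the HDG facet term through $\|\jmp{w_h}\|_F\lesssim\|w_h-\widehat u_h\|_{\partial K^+}+\|w_h-\widehat u_h\|_{\partial K^-}$ on $F=K^+\cap K^-$ (and $\|w_h\|_F$ on boundary facets, where $\widehat u_h=0$), giving $\tfrac1h\sum_{F\in\Eh}\|\jmp{w_h}\|_F^2\lesssim\|(w_h,\widehat u_h)\|_{1,h}^2$. I will then invoke the standard averaging-operator estimates $\sum_K\|\nabla(w_h-S_1w_h)\|_K^2\lesssim\tfrac1h\sum_F\|\jmp{w_h}\|_F^2$ and $\sum_K\|w_h-S_1w_h\|_K^2\lesssim h\sum_F\|\jmp{w_h}\|_F^2$, together with the $L^2$-stability $\|S_1w_h\|_\Omega\lesssim\|w_h\|_\Omega$ (which follows from nodal inverse inequalities and finite overlap).

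For \eqref{rd-proj1}, I apply the minimization property with $\mu=\Pi_h u_0$ and the comparison function $w_h=u_0\in\Wc\subset\Wh$, obtaining $\|\Pi_h u_0\|_{A_h}^2\lesssim\|(u_0,\Pi_h u_0)\|_{1,h}^2$. The diffusion and reaction volume terms reproduce $\|u_0\|_{A_{h,0}}^2$ exactly since $u_0$ is continuous. For the facet term I use that on each interior facet $\Pi_h u_0=P_{k-1}u_0$, so $\|u_0-\Pi_h u_0\|_F\le\|u_0-\bar u_0^F\|_F\lesssim h\|\nabla_F u_0\|_F$ by a facet Poincar\'e inequality; a scaling/trace argument then yields $\tfrac1h\|u_0-\Pi_h u_0\|_F^2\lesssim\|\nabla u_0\|_K^2$, and summing gives $\tfrac1h\sum_K\|u_0-\Pi_h u_0\|_{\partial K}^2\lesssim\|\nabla u_0\|_\Omega^2\le\|u_0\|_{A_{h,0}}^2$ (boundary facets contribute nothing since $u_0$ vanishes there). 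Hence $\|\Pi_h u_0\|_{A_h}\lesssim\|u_0\|_{A_{h,0}}$.

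For \eqref{rd-proj2}, write $w_h=\mathcal{L}_h(\widehat u_h)$, so $P_h\widehat u_h=S_1w_h$ and $\|P_h\widehat u_h\|_{A_{h,0}}^2=\|\nabla S_1w_h\|_\Omega^2+\tau\|S_1w_h\|_\Omega^2$. The gradient term is handled by the triangle inequality and the averaging approximation estimate: $\|\nabla S_1w_h\|_\Omega^2\lesssim\|\nabla w_h\|_\Omega^2+\tfrac1h\sum_F\|\jmp{w_h}\|_F^2\lesssim\|(w_h,\widehat u_h)\|_{1,h}^2$. The reaction term is where uniformity in $\tau$ must be respected: using the averaging \emph{approximation} bound here would produce a spurious factor $\tau h^2$, so instead I use the direct $L^2$-stability $\|S_1w_h\|_\Omega\lesssim\|w_h\|_\Omega$, giving $\tau\|S_1w_h\|_\Omega^2\lesssim\tau\|w_h\|_\Omega^2\le\|(w_h,\widehat u_h)\|_{1,h}^2$. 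Adding the two and using \eqref{norm-eq} gives $\|P_h\widehat u_h\|_{A_{h,0}}\lesssim\|(w_h,\widehat u_h)\|_{1,h}\simeq\|\widehat u_h\|_{A_h}$.

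For \eqref{rd-proj3}, note $\Pi_hP_h\widehat u_h=\Pi_h u_0$ with $u_0=S_1w_h$, and since $\rho_{A_h}^{-1/2}\simeq h$ it suffices to prove $\|\widehat u_h-\Pi_h u_0\|_{0,h}\lesssim h\|(w_h,\widehat u_h)\|_{1,h}$. On each interior facet $\widehat u_h\in\mathcal{P}^{k-1}$, so $(\widehat u_h-\Pi_h u_0)|_F=P_{k-1}(\widehat u_h-u_0)$ and $\|\widehat u_h-\Pi_h u_0\|_F\le\|\widehat u_h-u_0\|_F$; splitting $\widehat u_h-u_0=(\widehat u_h-w_h)+(w_h-S_1w_h)$, the first piece is bounded by $\sum_{F\in\Eh^o} h\|\widehat u_h-w_h\|_F^2\le h\sum_K\|w_h-\widehat u_h\|_{\partial K}^2\lesssim h^2\|(w_h,\widehat u_h)\|_{1,h}^2$, and the second by a trace inequality followed by the two averaging estimates, $\sum_{F\in\Eh^o} h\|w_h-S_1w_h\|_F^2\lesssim\sum_K\|w_h-S_1w_h\|_K^2+h^2\sum_K\|\nabla(w_h-S_1w_h)\|_K^2\lesssim h^2\|(w_h,\widehat u_h)\|_{1,h}^2$. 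Taking square roots and using \eqref{norm-eq} finishes the proof. The main obstacle throughout is the correct use of the averaging operator $S_1$: establishing (or quoting) its $H^1$- and $L^2$-stability and approximation estimates on broken polynomial spaces, translating the element-to-element jumps $\jmp{w_h}$ into the HDG element-to-facet quantity $w_h-\widehat u_h$, and in particular avoiding the $\tau h^2$ loss in the reaction term so that all constants remain uniform in both $h$ and $\tau$.
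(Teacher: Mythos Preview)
Your proof is correct and follows essentially the same route as the paper: the averaging-operator estimates for $S_1$ together with the jump-to-facet translation drive \eqref{rd-proj2} and \eqref{rd-proj3}, and the $\tau$-uniformity in \eqref{rd-proj2} is secured in both arguments by the $L^2$-stability of $S_1$ (the paper phrases this as $\|P_h\widehat u_h-\mathcal{L}_h(\widehat u_h)\|_\Omega\lesssim\|\mathcal{L}_h(\widehat u_h)\|_\Omega$, which is equivalent). The one noticeable difference is in \eqref{rd-proj1}: the paper splits $\|\Pi_h u_0\|_{A_h}\le\|\Pi_h u_0-u_0\|_{A_h}+\|u_0\|_{A_h}$ and then proves a separate lemma $\|u_0\|_{A_h}\lesssim\|u_0\|_{A_{h,0}}$ via the lifting operator, whereas you invoke the energy-minimization property of $\mathcal{L}_h$ directly with comparison function $w_h=u_0$; your version is a bit more streamlined but the underlying mechanism (Galerkin orthogonality of $(\mathcal{L}_h(\mu),\mu)$ against $(\Wh,0)$) is the same.
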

\begin{proof}
  By the definition of $\Pi_h$ in \eqref{rd-pi}, there holds
  \begin{align}
    \label{rd-p1}
    \|\Pi_h u_0-u_0\|_{0,h}
    =\inf_{v_0\in\What}\|u_0-v_0\|_{0,h}
    \le \|u_0-\bar{u}_0\|_{0,h}\lesssim 
    \|u_0-\bar{u}_0\|_{\Omega}\lesssim 
    h\|\nabla u_0\|_{\Omega},
  \end{align} 
  where $\bar{u}_0\in W_h^0$ is the $L^2$-projection of 
  $u_0$ into the piecewise constant space $W_h^0$.
 
  Next, we prove 
 \begin{align}
   \label{rd-p2}
   \|u_0\|_{A_h}\lesssim \|u_0\|_{A_{h,0}},\quad \forall u_0\in \Wc,
 \end{align}
 where  $u_0$ on the left hand side is understood as a function 
 on the mesh skeleton.
 By \eqref{norm-eq}, we have 
 \[
   \|u_0\|_{A_h}\simeq \|(\mathcal{L}_h(u_0), u_0)\|_{1,h}.
 \] 
 On the other hand, definition of the lifting operator \eqref{rd-L1} implies, 
 for any $v_h\in\Wh$, 
\begin{align*}
  a_h((\mathcal{L}_h(u_0)-u_0, 0), (v_h, 0))
  =&\; -a_h((u_0, u_0), (v_h, 0))
\end{align*}
Taking $v_h=\mathcal{L}_h(u_0)-u_0$ in the above equation and applying the
Cauchy-Schwartz inequality, we get 
\[
  \|(\mathcal{L}_h(u_0)-u_0, 0)\|_{1,h}^2
  \lesssim
  \|(u_0, u_0)\|_{1,h}^2 = \|u_0\|_{A_{h,0}}^2,
\] 
where the last step follows from definition of the norms.
The estimate \eqref{rd-p2} then follows from the triangle inequality and the
above inequality.

Now the inequality \eqref{rd-proj1} follows from \eqref{rd-p1}--\eqref{rd-p2} and 
$\rho_{A_h}\simeq h^{-2}$ (from Lemma \ref{lemma:rd1}):
\begin{align*}
  \|\Pi_h u_0\|_{A_h}\le 
  \|\Pi_h u_0-u_0\|_{A_h}+ 
  \|u_0\|_{A_h} 
  \lesssim h^{-1} \|\Pi_h u_0-u_0\|_{0,h}
  +
  \|u_0\|_{A_{h,0}}
  \lesssim 
  \|u_0\|_{A_{h,0}}.
\end{align*}

To prove the inequality \eqref{rd-proj2}, we use the following result
for the averaging operator $S_1$
\begin{align*}
  \|P_h\widehat u_h - \mathcal{L}_h(\widehat u_h)\|_{\Omega}^2
  \lesssim h^2\|\nabla \mathcal{L}_h(\widehat u_h)\|_{\Th}^2
  +\left\|\jmp{\mathcal{L}_h(\widehat u_h)}\right\|_{0,h}^2
\end{align*}
where $\jmp{\cdot}$ is the jump on interior facets.
Then the inverse inequality implies that 
\begin{align}
  \label{rd-p3}
\|P_h\widehat u_h - \mathcal{L}_h(\widehat u_h)\|_{\Omega}
\lesssim \|\mathcal{L}_h(\widehat u_h)\|_{\Omega},
\end{align}
and definition of the norm $\|\cdot\|_{A_h}$ implies that 
\begin{align}
  \label{rd-p4}
\|P_h\widehat u_h - \mathcal{L}_h(\widehat u_h)\|_{\Omega}
\lesssim h\|\widehat u_h\|_{A_h}.
\end{align}
Hence, we have 
\begin{align*}
  \|P_h \widehat u_h\|_{A_{h,0}}^2\lesssim &\;
\|P_h \widehat u_h-\mathcal{L}_h(\widehat u_h)\|_{A_{h,0}}^2
  +\|\mathcal{L}_h(\widehat u_h)\|_{A_{h,0}}^2\\
  =&\;
\|\nabla(P_h \widehat u_h-\mathcal{L}_h(\widehat u_h))\|_{\Th}^2
+\tau\|P_h \widehat u_h-\mathcal{L}_h(\widehat u_h)\|_{\Omega}^2+
  \|\mathcal{L}_h(\widehat u_h)\|_{A_{h,0}}^2\\
  \lesssim&\;
  h^{-2}\|P_h \widehat u_h-\mathcal{L}_h(\widehat u_h)\|_{\Omega}^2
  +\|\mathcal{L}_h(\widehat u_h)\|_{A_{h,0}}^2 \quad\quad\quad\text{by the inverse inequality 
  and \eqref{rd-p3},}\\
  \lesssim&\;
\|\widehat u_h\|_{A_h}^2
  \quad\quad\quad
  \quad\quad\quad
  \quad\quad\quad
  \quad\quad\quad
  \quad\quad\quad
  \quad\;\;
  \text{by the definition of norms
  and \eqref{rd-p4}}.
\end{align*}

Finally, let us prove the approximation property \eqref{rd-proj3}.
\begin{align*}
    \|\widehat u_h-\Pi_h P_h \widehat u_h\|_{0,h}\lesssim &\;
    \|
    \widehat u_h-\mathcal{L}_h(\widehat u_h)
\|_{0,h}
+
    \|
    \mathcal{L}_h(\widehat u_h)-P_h \widehat u_h
\|_{0,h}
+
    \|
    P_h \widehat u_h
    -\Pi_h P_h\widehat u_h
\|_{0,h}\\
    \lesssim &\;
    h\|
    \widehat u_h
\|_{A_h}
+
    \|
    \mathcal{L}_h(\widehat u_h)-P_h \widehat u_h
\|_{\Omega}
+
   h \|
   \nabla (P_h\widehat u_h)
\|_{\Omega} \\
    \lesssim &\;
    h\|
\widehat u_h
\|_{A_h}
+
   \|
   \mathcal{L}_h(\widehat u_h)-
   P_h\widehat u_h
\|_{\Omega} 
+
   h \|
   \nabla (\mathcal{L}_h(\widehat u_h))
\|_{\Th}\\
    \lesssim &\;
    h\|\widehat u_h\|_{A_h},
\end{align*}
where the first step follows from the triangle inequality, 
the second step follows from the inverse inequality and 
\eqref{rd-p1}, the third step follows from the triangle inequality 
and inverse inequality, and the last step follows from 
\eqref{rd-p4} and definition of the $A_h$-norm.
\end{proof}

\textbf{(v)}
Finally, 
we conclude the following optimality of the auxiliary space 
preconditioner 
by invoking Theorem \ref{thm:asp} and applying 
Lemma \ref{lemma:rd1}--Lemma \ref{lemma:rd3}.
\begin{theorem}[ASP for the condensed HDG operator \eqref{rd-ah}]
  \label{thm:rd}
  Let 
  $B_h
  =R_h+\Pi_hB_{h,0}\Pi_h^t
  $ be the auxiliary space preconditioner for the operator 
  $A_h$ in \eqref{rd-ah} with 
  $R_h$ being the Jacobi smoother for $A_h$, 
  $B_{h,0}$ being an optimal preconditioner for $A_{h,0}$ in \eqref{rd-ah0} such
  that 
  $
  \kappa (B_{h,0}A_{h,0})\simeq 1,
  $
  and $\Pi_h$ being the projector in \eqref{rd-pi}. 
  Then, 
  $
  \kappa (B_{h}A_{h})\simeq 1.
  $
\end{theorem}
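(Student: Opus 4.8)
The plan is to obtain this as an immediate corollary of the abstract ASP theory, Theorem \ref{thm:asp}, by making the concrete identifications $\mathcal{V} = \What$ equipped with $(\cdot,\cdot)_{0,h}$, $A = A_h$, $\mathcal{V}_0 = \Wc$ equipped with the $L^2$-inner product $[\cdot,\cdot]$, $A_0 = A_{h,0}$, and taking $R = R_h$, $B_0 = B_{h,0}$, $\Pi = \Pi_h$, $P = P_h$. With these choices each of the five hypotheses \eqref{asp-as1}--\eqref{asp-as5} has already been verified in the preceding lemmas, so the bulk of the argument reduces to reading off the five constants $\alpha_0,\alpha_1,\lambda_0,\lambda_1,\beta_0,\beta_1,\gamma_0$ and confirming that they are all $\simeq 1$, i.e. bounded above and below independently of both $h$ and $\tau$.

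Concretely, I would argue as follows. The smoother bound \eqref{asp-as1} is exactly Lemma \ref{lemma:rd2}, which gives $\alpha_0 \simeq \alpha_1 \simeq 1$, where $\rho_A = \rho_{A_h} \simeq h^{-2}$ by Lemma \ref{lemma:rd1}. The coarse-solver bound \eqref{asp-as2} is the hypothesis $\kappa(B_{h,0}A_{h,0}) \simeq 1$, which after normalizing $B_{h,0}$ gives $\lambda_0 \simeq \lambda_1 \simeq 1$. The three remaining estimates are supplied by Lemma \ref{lemma:rd3}: the prolongation stability \eqref{asp-as3} is \eqref{rd-proj1}, giving $\beta_1 \simeq 1$; the stability \eqref{asp-as4} of $P_h$ is \eqref{rd-proj2}, giving $\beta_0 \simeq 1$; and the approximation property \eqref{asp-as5} is \eqref{rd-proj3}, giving $\gamma_0 \simeq 1$ once one recalls that $\rho_{A_h}^{-1/2} \simeq h$. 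Note that $P_h$ is not a right inverse of $\Pi_h$, so the sharper second conclusion of Theorem \ref{thm:asp} does not apply and one must use the full estimate involving the smoother $R_h$.

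With all five constants bounded above and below uniformly, substituting into the condition number estimate of Theorem \ref{thm:asp},
\[
\kappa(B_h A_h) \le (\alpha_1 + \beta_1 \lambda_1)\big((\alpha_0 \gamma_0)^{-1} + (\beta_0 \lambda_0)^{-1}\big),
\]
yields $\kappa(B_h A_h) \lesssim 1$; since $B_h A_h$ is similar to an SPD operator the trivial lower bound $\kappa \ge 1$ also holds, whence $\kappa(B_h A_h) \simeq 1$. The theorem itself is therefore pure bookkeeping; the only genuine work lies upstream in Lemma \ref{lemma:rd3}. Accordingly, I expect the main subtlety to be confirming the simultaneous $\tau$-uniformity of every constant — in particular that the approximation estimate \eqref{rd-proj3} and the $A_{h,0}$-stability \eqref{rd-proj2} conceal no hidden $\tau$-dependence — which is precisely why Lemma \ref{lemma:rd3} is stated in terms of the $\tau$-weighted energy norms $\|\cdot\|_{A_h}$ and $\|\cdot\|_{A_{h,0}}$ rather than plain $H^1$-type norms.
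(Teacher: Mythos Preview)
Your proposal is correct and matches the paper's approach exactly: the paper does not even write out a separate proof for this theorem, stating only that it follows ``by invoking Theorem~\ref{thm:asp} and applying Lemma~\ref{lemma:rd1}--Lemma~\ref{lemma:rd3}.'' Your write-up simply makes this bookkeeping explicit, correctly identifying which lemma supplies which hypothesis of the abstract ASP theorem.
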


\subsection{Divergence-conforming HDG for vectorial reaction-diffusion}
\label{sec:vrd}
\subsubsection{The model and the HDG scheme}
We consider the 
following constant-coefficient vectorial reaction-diffusion equation with 
a homogeneous Dirichlet 
boundary condition:
\begin{align}
  \label{vrd-eq}
  -\triangle \bld u + \tau \bld u = \bld f \;\; \text{in } \Omega, \quad\quad 
\bld u|_{\partial \Omega}=0
\end{align}
where $\tau\ge 0$ is the non-negative reaction coefficient.
Here $\bld u: \Omega \rightarrow \mathbb{R}^d$ is the solution to be
approximated.
Again, our analysis focuses on this constant coefficient case, while 
variable coefficients will be covered in the numerical experiments. 

We apply the (interior penalty based) divergence-conforming HDG scheme with projected jumps 
\cite[Chapter 2]{Lehrenfeld10}
to discretize the equation \eqref{vrd-eq}. Given a polynomial degree $k\ge 1$, 
the HDG scheme reads as follows:
find $(\bld u_h, \widehat{\bld u}_h)\in \Vh\times \Vhat$ such that 
  \begin{align}
  \label{hdg-vrd}
  \bld a_h\left((\bld u_h, \widehat{\bld u}_h), (\bld v_h, \widehat{\bld v}_h)
    \right) = \int_{\Omega} \bld f\cdot\bld v_h\mathrm{dx},\quad
    \forall (\bld v_h, \widehat{\bld v}_h)\in 
    \Vh\times \Vhat,
  \end{align}
where the bilinear form 
\begin{align*}
  \bld a_h((\uh, \uhat), (\vh, \vhat))
  := \sum_{K\in\Th}&\; \int_{K}(\nabla \uh:\nabla \vh+\tau \uh\cdot \vh)\mathrm{dx}
  -\int_{\partial K} (\nabla \uh) \bld n\cdot \mathsf{tang}(\vh-\vhat)\mathrm{ds}\\
  &\!\!\!\!\!\!\!\!\!\!\!\!\!\!\!\!\!\!
  \!\!\!\!\!\!\!\!\!\!\!\!\!\!\!\!\!\!
  \!\!\!\!\!\!\!\!\!\!\!\!\!\!\!\!\!\!
  -\int_{\partial K} (\nabla \vh)\bld n\cdot\mathsf{tang}(\uh-\uhat)\mathrm{ds}
  +\int_{\partial K}\frac{\alpha k^2}{h} 
  \mathsf{tang}(\bld P_{k-1} \uh-\uhat)\cdot\tang(\bld P_{k-1}
  \vh-\vhat)\mathrm{ds},
\end{align*}
with $\bld P_{k-1}$ denoting the $L^2$-projection into the space of
{\it vectorial} facet-piecewise
polynomials of degree $k-1$. 
Here, again, we take the stability parameter $\alpha=4$ 
to ensure  coercivity. In particular, 
we have the following norm equivalence result:
\begin{align}
  \label{hdg-vrd-norm}
  \bld a_h((\uh, \uhat), (\uh, \uhat))
  \simeq \|(\uh, \uhat)\|_{1,h}^2, \quad\forall 
(\uh, \uhat)\in \Vh\times \Vhat.
\end{align} 
where 
\[
  \|(\uh, \uhat)\|_{1,h}^2:=\sum_{K\in\Th}(\|\nabla \uh\|_K^2
  +\tau \|\uh\|_K^2
  +\frac{1}{h}\|\uh-\uhat\|_{\partial K}^2)
\]
is a norm on $\Vh\times \Vhat$.
\subsubsection{Static condensation}
Static condensation for the scheme \eqref{hdg-vrd} is
similar to that for the scalar case \eqref{hdg-rd}, but the notation is a bit
more involved.

We use the space splitting in \eqref{vfe} to perform static condensation
that locally eliminate DOFs associated with the bubble space $\bld V_h^{k,o*}$.
Denoting the following lifting operator 
$\bld{\mathcal{L}}_h:\Vhd\times \Vhat\rightarrow \Vhos$:
given $(\whd,\what) \in \Vhd\times \Vhat$, 
$\bld{\mathcal{L}}_h(\whd, \what)$ is the unique function in $\Vhos$ such that
\begin{align}
  \label{vrd-L1}
  \bld a_h((\bld{\mathcal{L}}_h(
  \whd,\what), 0), (\vho,0)) = 
  -\bld a_h((\whd, \what), (\vho,0)) 
  ,\;\;\;\; \forall \vho\in \Vhos,
\end{align}
and let $\bld{\mathcal{L}}_h^f$ be the unique function in $\Vhos$
such that 
\begin{align}
  \label{vrd-L2}
  \bld  a_h((\bld{\mathcal{L}}_h^f, 0), (\vho,0)) = 
  \int_\Omega\bld f \cdot\vho\mathrm{dx},\;\; \forall \vho\in \Vhos.
\end{align}
Again, well-posedness of the above linear systems easily follows from the coercivity result 
\eqref{hdg-vrd-co}.

\begin{lemma}[Characterization of the condensed system for \eqref{hdg-vrd}]
Let $(\uh,\uhat)\in \Vh\times \Vhat$
be the unique solution 
  to the HDG scheme \eqref{hdg-vrd}.
  Then,
  \begin{subequations}
  \begin{align}
    \label{vrd-uh}
    \uh = &\;\uhd+\bld{\mathcal{L}}_h(\uhd, \uhat)+\bld{\mathcal{L}}_h^f, 
  \end{align}
  and 
  $(\uhd, \uhat)\in 
  \Vhd\times \Vhat
  $ is the unique solution to the following condensed system
  \begin{align}
    \label{vrd-uhat}
    \bld  a_h\left(
      (\uhd+\bld{\mathcal{L}}_h(\uhd,\uhat), \uhat), 
      (\vhd+\bld{\mathcal{L}}_h(\vhd,\vhat), \vhat) 
  \right)
    = 
    -\bld a_h((\bld{\mathcal{L}}_h^f, 0), 
    (\vhd, \vhat))
    +\int_{\Omega}\bld f\cdot\vhd\,\mathrm{dx},
  \end{align}
  for all $(\vhd,\vhat)\in\Vhd\times \Vhat$.
  In particular, the reduced system \eqref{vrd-uhat} is symmetric and positive
  definite, and 
  \begin{align}
    \label{vnorm-eq}
    \bld  a_h\left(
      (\vhd+\bld{\mathcal{L}}_h(\vhd,\vhat), \uhat), 
      (\vhd+\bld{\mathcal{L}}_h(\vhd,\vhat), \vhat) 
  \right)
    \simeq 
    \|(\vhd+\bld{\mathcal{L}}_h(\vhd,\vhat), \vhat)\|_{1,h}^2,
  \end{align}
  for all 
$(\vhd, \vhat)\in \Vhd\times \Vhat$.
  \end{subequations}
\end{lemma}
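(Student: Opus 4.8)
The plan is to mirror the argument used for the scalar case in Lemma \ref{lemma:rd}, since the vectorial condensed system is built on exactly the same static-condensation mechanism, only with the scalar lifting $\mathcal{L}_h$ replaced by the vectorial lifting $\bld{\mathcal{L}}_h$ acting on the pair $(\whd,\what)$, and with the local bubble space $\bld V_h^{k,o*}$ (rather than the full local space) being eliminated. First I would derive the representation \eqref{vrd-uh}. Taking test functions of the form $(\vho,0)$ with $\vho\in\Vhos$ in the HDG scheme \eqref{hdg-vrd}, and writing $\uh = \uhd + \uho$ with $\uhd\in\Vhd$ being the global part and $\uho\in\Vhos$ the local bubble part, I would use the linearity of $\bld a_h$ together with the defining equations \eqref{vrd-L1}--\eqref{vrd-L2} for the lifting operators to identify $\uho = \bld{\mathcal{L}}_h(\uhd,\uhat)+\bld{\mathcal{L}}_h^f$. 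This gives \eqref{vrd-uh} directly.

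Next I would obtain the condensed equation \eqref{vrd-uhat}. Here I would test \eqref{hdg-vrd} against functions of the form $(\vhd+\bld{\mathcal{L}}_h(\vhd,\vhat),\vhat)$ for arbitrary $(\vhd,\vhat)\in\Vhd\times\Vhat$. Substituting the representation \eqref{vrd-uh} for $\uh$ on the left, the terms involving $\bld{\mathcal{L}}_h^f$ paired against the lifted test function can be moved to the right-hand side, and the key observation — just as in the scalar derivation following \eqref{rd-L1} — is that by the very definition of the lifting operator \eqref{vrd-L1}, the cross term $\bld a_h\big((\bld{\mathcal{L}}_h(\uhd,\uhat),0),(\bld{\mathcal{L}}_h(\vhd,\vhat),0)\big)$ together with $\bld a_h\big((\uhd,\uhat),(\bld{\mathcal{L}}_h(\vhd,\vhat),0)\big)$ collapses appropriately, so that the bilinear form restricted to the condensed variables is symmetric in $(\uhd,\uhat)$ and $(\vhd,\vhat)$. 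Reordering the resulting terms and isolating the load contributions yields \eqref{vrd-uhat}, with the right-hand side $-\bld a_h((\bld{\mathcal{L}}_h^f,0),(\vhd,\vhat))+\int_\Omega \bld f\cdot\vhd\,\mathrm{dx}$.

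For the symmetry and positive-definiteness assertion, symmetry is inherited from that of $\bld a_h$ itself after verifying that the condensed form is precisely $\bld a_h$ evaluated on the lifted arguments on both slots; positive-definiteness then follows from the coercivity \eqref{hdg-vrd-co} once I establish the norm equivalence \eqref{vnorm-eq}. The equivalence \eqref{vnorm-eq} is the direct vectorial analogue of \eqref{norm-eq}, and I would deduce it from the global norm equivalence \eqref{hdg-vrd-norm} by simply evaluating both sides at the argument $(\vhd+\bld{\mathcal{L}}_h(\vhd,\vhat),\vhat)$, which is an admissible element of $\Vh\times\Vhat$.

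The main obstacle — and the point where the vectorial case genuinely differs from the scalar one — is bookkeeping the splitting $\uh=\uhd+\uho$ correctly: one must confirm that testing against $(\vho,0)$ only sees the local bubble components (since $\Vhos$ is $\bld a_h$-orthogonal to the constraint-consistent structure only after the hierarchical decomposition \eqref{vfe} is invoked), and that the global degrees of freedom $\uhd$ genuinely survive into the condensed system rather than being eliminated. The term $\int_\Omega \bld f\cdot\vhd\,\mathrm{dx}$ appearing on the right of \eqref{vrd-uhat} — absent in the scalar analogue \eqref{rd-uhat} — reflects precisely that the global part $\vhd$ contributes to the load directly, unlike the scalar hybrid variable $\widehat v_h$ which is supported only on the skeleton; keeping track of this asymmetry between the scalar and vectorial formulations is where care is required. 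Everything else reduces to linearity of $\bld a_h$ and the defining relations \eqref{vrd-L1}--\eqref{vrd-L2}, exactly as in the proof of Lemma \ref{lemma:rd}.
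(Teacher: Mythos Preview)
Your proposal is correct and follows exactly the approach the paper takes, which simply refers back to the proof of Lemma~\ref{lemma:rd}. One small clarification on your ``obstacle'' paragraph: no $\bld a_h$-orthogonality between $\Vhd$ and $\Vhos$ is needed for the first step---the direct sum $\Vh = \Vhd \oplus \Vhos$ from \eqref{vfe} alone makes the splitting $\uh = \uhd + \uho$ well-defined, and testing \eqref{hdg-vrd} against $(\vho,0)$ then determines $\uho$ in terms of $(\uhd,\uhat,\bld f)$ via \eqref{vrd-L1}--\eqref{vrd-L2} by pure linearity.
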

\begin{proof}
  See the proof of Lemma \ref{lemma:rd}.
\end{proof}

\subsubsection{The auxiliary space preconditioner}
Now, we follow Remark \ref{rk1} to construct the auxiliary space perconditioner 
for the reduced HDG system \eqref{rd-uhat}.

\textbf{(i)} We denote the following $L^2$-like inner product on 
the compound global space $\Vhd\times \Vhat$: 
\begin{align}
  \label{vrd-l2}
\left((\uhd, \uhat), (\vhd, \vhat)\right)_{0,h}:= 
  (1+\tau h^2)
  \int_{\Omega}(\uhd\cdot\vhd)\mathrm{dx}
  +\sum_{F\in\Eh}h\int_F\tang(\uhat)\cdot\tang(\vhat)\mathrm{ds},
\end{align}
whose induced norm is denoted as 
$\|(\vhd, \vhat)\|_{0,h}^2:=\left((\vhd, \vhat), (\vhd, \vhat)\right)_{0,h}$.
We define the condensed HDG operator $\bld A_h:
\Vhd\times \Vhat\rightarrow\Vhd\times \Vhat$ as 
\begin{align}
  \label{vrd-ah}
  \left(\bld A_h(\uhd, \uhat), (\vhd, \vhat)\right)_{0,h}:=
    \bld  a_h\left(
      (\uhd+\bld{\mathcal{L}}_h(\uhd,\uhat), \uhat), 
      (\vhd+\bld{\mathcal{L}}_h(\vhd,\vhat), \vhat). 
  \right)
\end{align}
The following result gives an estimation of the 
spectral radius of $\bld A_h$ and its condition number.
\begin{lemma}[Spectral radius and condition number]
  \label{lemma:vrd1}
  Let $\bld A_h$ be the operator given in \eqref{vrd-ah}. Then,
  there holds $\rho_{\!\!\bld A_h}\simeq h^{-2}$ and 
  $\kappa (\bld A_h) \lesssim \frac{h^{-2}}{\min\{
  \frac{\tau+1}{1+\tau h^2},\, h^{-2}\}}$.
\end{lemma}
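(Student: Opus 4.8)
The plan is to follow the exact template set by the scalar case in Lemma \ref{lemma:rd1}, adapting each step to the vectorial compound space $\Vhd\times \Vhat$ and to the $\tau$-weighted inner product \eqref{vrd-l2}. The key structural difference is that the inner product now carries the factor $(1+\tau h^2)$ on the interior (volume) component and an $h$-weight on the tangential skeleton component, so the spectral radius and condition-number bounds must be checked against these weights. First I would establish the upper bound $\rho_{\bld A_h}\lesssim h^{-2}$. By the definition \eqref{vrd-ah} together with the norm equivalence \eqref{vnorm-eq}, we have $\left(\bld A_h(\vhd,\vhat),(\vhd,\vhat)\right)_{0,h}\simeq \|(\vhd+\bld{\mathcal{L}}_h(\vhd,\vhat),\vhat)\|_{1,h}^2$. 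As in the scalar argument, I would use the lifting equation \eqref{vrd-L1} with a Cauchy--Schwarz step to show that the lifted part is controlled by the data, namely $\|(\bld{\mathcal{L}}_h(\vhd,\vhat),\vhat)\|_{1,h}\lesssim \|(\vhd,\vhat)\|_{1,h}$ after accounting for the $\partial K$-terms, and then bound $\|(\vhd+\bld{\mathcal{L}}_h(\vhd,\vhat),\vhat)\|_{1,h}^2$ by the raw $1,h$-norm of $(\vhd,\vhat)$. The point is to compare this against $h^{-2}\|(\vhd,\vhat)\|_{0,h}^2$; an inverse inequality on the gradient terms produces the $h^{-2}$ factor on the volume part, while the skeleton term already scales like $h^{-1}\|\vhat-\cdots\|_{\partial K}^2 \simeq h^{-2}\cdot h\|\tang(\vhat)\|$, matching the weight in \eqref{vrd-l2}.

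Next I would prove the condition-number bound, which is where the $\tau$-dependent weight actively enters. Mirroring the scalar lower-eigenvalue estimate, the goal is to show $\min\{\tfrac{\tau+1}{1+\tau h^2},\,h^{-2}\}\|(\vhd,\vhat)\|_{0,h}^2\lesssim \left(\bld A_h(\vhd,\vhat),(\vhd,\vhat)\right)_{0,h}$. Expanding $\|(\vhd,\vhat)\|_{0,h}^2 = (1+\tau h^2)\|\vhd\|_\Omega^2 + \sum_F h\|\tang(\vhat)\|_F^2$, I would bound each piece by the energy. For the skeleton piece the inverse/triangle inequality combined with a discrete Poincar\'e inequality (the vectorial analogue of \cite[Lemma 2.1]{Arnold82}, valid on convex domains) gives control by $\|(\cdot,\vhat)\|_{1,h}^2$. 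The delicate point is the volume piece: dividing the target weight through, the factor $(1+\tau h^2)$ cancels against $\tfrac{\tau+1}{1+\tau h^2}$ to leave $(\tau+1)\|\vhd\|_\Omega^2$, which is exactly what the diffusion-plus-reaction energy $\|\nabla\vhd\|^2+\tau\|\vhd\|^2$ controls once the discrete Poincar\'e inequality supplies $\|\vhd\|_\Omega\lesssim\|(\vhd+\bld{\mathcal{L}}_h(\vhd,\vhat),\vhat)\|_{1,h}$. Taking the ratio of the two eigenvalue bounds then yields $\kappa(\bld A_h)\lesssim \tfrac{h^{-2}}{\min\{(\tau+1)/(1+\tau h^2),\,h^{-2}\}}$.

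Finally, for the matching lower bound $h^{-2}\lesssim\rho_{\bld A_h}$, I would test $\bld A_h$ against a single localized basis function of $\Vhd\times\Vhat$ supported on one facet, and invoke finite-dimensional norm equivalence together with the standard scaling argument to obtain $\left(\bld A_h\bld\mu,\bld\mu\right)_{0,h}\simeq h^{-2}\|\bld\mu\|_{0,h}^2$, exactly as in \eqref{rd-scale}.

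I expect the main obstacle to be bookkeeping in the volume term: unlike the scalar case, the condensed operator now mixes the interior DOFs $\vhd$ (the $\bld V_{h,0}^{k,\partial}$ part, which survive condensation) with the lifted bubble contribution $\bld{\mathcal{L}}_h(\vhd,\vhat)$, and one must verify that the weight $(1+\tau h^2)\|\vhd\|_\Omega^2$ in the inner product is correctly balanced by the energy so that the $\tau$-uniform factor $\tfrac{\tau+1}{1+\tau h^2}$ emerges cleanly rather than an inferior $\tau$-dependent constant. The choice of the volume weight $(1+\tau h^2)$ in \eqref{vrd-l2} is precisely engineered to make this cancellation work, and the proof hinges on tracking it through the inverse and Poincar\'e inequalities without losing uniformity in $\tau$.
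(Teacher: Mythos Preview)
Your overall strategy mirrors the paper's proof, and both the upper bound $\rho_{\bld A_h}\lesssim h^{-2}$ and the localized-basis lower bound are handled correctly. However, there is a genuine gap in your treatment of the minimum-eigenvalue estimate. You assert that the discrete Poincar\'e inequality supplies $\|\vhd\|_\Omega\lesssim\|(\vhd+\bld{\mathcal{L}}_h(\vhd,\vhat),\vhat)\|_{1,h}$, but this is not what Poincar\'e gives: it controls $\|\uh^+\|_\Omega$ with $\uh^+:=\vhd+\bld{\mathcal{L}}_h(\vhd,\vhat)$, not $\|\vhd\|_\Omega$. Likewise, the energy norm contains $\|\nabla\uh^+\|_K^2+\tau\|\uh^+\|_K^2$, not the quantity $\|\nabla\vhd\|^2+\tau\|\vhd\|^2$ that you invoke. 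The passage from $\uh^+$ back to the surviving global component $\vhd$ is precisely the new difficulty in the vectorial setting, and your proposal does not address it.

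The paper closes this gap with two structural observations specific to the space $\Vhd$. First, a scaling identity
\[
\|\vhd\|_\Omega^2\;\simeq\;\sum_{F\in\Eh}h\,\|\vhd\cdot\bld n\|_F^2,\qquad \forall\,\vhd\in\Vhd,
\]
which holds because functions in $\Vhd$ are, up to norm equivalence, determined by their facet normal traces. Second, the equality $\uh^+\cdot\bld n=\vhd\cdot\bld n$ on every facet, valid because the bubble space $\Vhos$ has vanishing normal component on $\Eh$. Combining these with a trace/inverse inequality yields $\|\vhd\|_\Omega^2\simeq\sum_F h\|\uh^+\cdot\bld n\|_F^2\lesssim\|\uh^+\|_\Omega^2$, after which Poincar\'e applied to $\uh^+$ gives $(\tau+1)\|\vhd\|_\Omega^2\lesssim\|(\uh^+,\uhat)\|_{1,h}^2$ and the claimed lower bound on $\lambda_{\bld A_h}^{\min}$ follows. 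Without this normal-trace mechanism your argument does not go through; you should insert it where you currently appeal to Poincar\'e for $\vhd$.
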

\begin{proof}
  The proof is similar to that for Lemma \ref{lemma:rd1}. 
  Here we only sketch the main steps.
  
  A simple scaling argument implies that 
  \begin{align}
    \label{vrd-1}
    \|\uhd\|_\Omega^2 \simeq\sum_{F\in\Eh} h\|\uhd\cdot \bld n\|_{F}^2, 
    \quad \forall \uhd \in \Vhd.
  \end{align}
  To simplify notation, we denote 
  \[
    \uh^+:=\uhd+\bld{\mathcal{L}}_h(\uhd, \uhat).
  \] 
  The Cauchy-Schwarz inequality implies that 
  \[
    \|(\uhd, \uhat)\|_{\bld A_h}\simeq 
    \|(\uh^+, \uhat)\|_{1,h}\lesssim
    \|(\uhd, \uhat)\|_{1,h}
    \lesssim
    h^{-1}\|(\uhd,\uhat)\|_{0,h},
  \] 
  where the last step involves the inverse inequality and triangle inequality.
 Hence, $\rho_{\!\! \bld A_h}\lesssim h^{-2}$.

To prove the condition number estimate, we again use 
the following discrete Poincar\'e inequality
\[
  \|\uh^+\|_{\Omega}\lesssim \|(\uh^+,\uhat)\|_{1,h}.
\] 
Then, 
  \begin{align*}
    \|(\uh^+, \uhat)\|_{1,h}^2
    \gtrsim&\; 
    (\tau+1)\|\uh^+\|_{\Omega}^2
    +\sum_{F\in\Eh}\frac{1}{h}\|\tang(\uh^+-\uhat)\|_F^2\\
    \gtrsim&\; 
    \sum_{F\in\Eh}
    \left(
      (\tau+1)(h\|\uhd\cdot \bld n\|_F^2+
      h\|\tang(\uh^+)\|_F^2)+\frac{1}{h}\|\tang(\uh^+-\uhat)\|_F^2
    \right)\\
    \gtrsim&\;
    (\tau +1)\|\uhd\|_{\Omega}^2
    +
    \min\{\tau+1, h^{-2}\}
    \sum_{F\in\Eh}
     h \|\tang(\uhat)\|_F^2\\
    \gtrsim&\;
    \min\{\frac{\tau+1}{1+\tau h^2}, h^{-2}\}
    \|(\uhd, \uhat)\|_{0,h},
  \end{align*}
  where in the second step we used the inverse inequality 
  and the fact that $\uh^+\cdot \bld n = \uhd\cdot\bld n$.
  This implies 
 $
  \lambda_{\!\!\bld A_h}^{\min}
  \gtrsim  
  \min\{\frac{\tau+1}{1+\tau h^2}, h^{-2}\}
  $, hence
  \[
    \kappa (\bld A_h) = \frac{\rho_{\!\!\bld A_h}}{\lambda_{\!\!\bld A_h}^{\min}}
    \lesssim
\frac{h^{-2}}{
\min\{\frac{\tau+1}{1+\tau h^2}, h^{-2}\}}.
  \]

  Finally, the lower bound on the spectral radius $h^{-2}\lesssim \rho_{\!\!\bld A_h}$
  following from the following estimate
  \begin{align}
    \label{rd-scale}
    \|(\bld \phi_h^\partial, \widehat{\bld \phi}_h)\|_{\bld A_h}^2\simeq
  h^{-2}\|(\bld\phi_h^{\partial_h}, \widehat{\bld\phi}_h)\|_{0,h}^2,
  \end{align}
  for any pair of 
  local bases functions 
  $(\bld \phi_h^\partial, \widehat{\bld \phi}_h)\in \Vhd\times \Vhat$, 
  which is obtained from 
  norm equivalence of finite dimensional spaces and the scaling argument.
\end{proof}
\begin{remark}[The case with a large reaction parameter]
  \label{rk:3.3}
When the reaction parameter $\tau\gtrsim h^{-2}$, 
Lemma \ref{lemma:vrd1} implies that $\kappa(\bld A_h)\simeq 1$.
In this case, a simple Jacobi preconditioner is again
an optimal one for $\bld A_h$.
\end{remark}

\textbf{(ii)} 
Similar to the scalar case, we can show that 
the simple Jacobi smoother satisfies \eqref{asp-as1}, and 
in practical implementation, we prefer to use a block symmetric Gauss-Seidel
smoother to improve its efficiency.

\begin{lemma}[Spectum of the Jacobi preconditioner]
  \label{lemma:vrd2}
  Let   $\bld D_h$ be the  diagonal component of 
  $\bld A_h$.
  Then, the following equivalence holds
  for the Jacobi smoother $\bld R_h=\bld D_h^{-1}$:
  \[
    \left(\bld R_h(\uhd,\uhat), (\uhd, \uhat)\right)_{0,h}\simeq \rho_{\!\!\bld A_h}^{-1}
    \|(\uhd, \uhat)\|_{0,h}^2.
  \] 
\end{lemma}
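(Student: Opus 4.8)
The plan is to mimic the proof of Lemma \ref{lemma:rd2} for the scalar case, reducing the claim on the Jacobi smoother $\bld R_h = \bld D_h^{-1}$ to a statement about the diagonal entries of $\bld A_h$ being spectrally equivalent to $\rho_{\!\!\bld A_h}\simeq h^{-2}$ times the corresponding $(\cdot,\cdot)_{0,h}$-diagonal entries. Concretely, it suffices to show that
\[
  \left(\bld D_h(\uhd,\uhat), (\uhd, \uhat)\right)_{0,h}\simeq \rho_{\!\!\bld A_h}\,\|(\uhd, \uhat)\|_{0,h}^2,
\]
since inverting a diagonal operator and using $\rho_{\!\!\bld A_h}\simeq h^{-2}$ then yields the stated equivalence for $\bld R_h$.

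First I would fix a basis for the compound global space $\Vhd\times \Vhat$ that is orthogonal with respect to the inner product $(\cdot,\cdot)_{0,h}$ in \eqref{vrd-l2}. Because this inner product splits as a volume term on $\Vhd$ plus a facet term on $\Vhat$, and because each basis function in the hierarchical splitting \eqref{hdiv-split-0} (the $\bld\phi_F^0$ and $\bld\phi_F^i$) and each tangential facet basis function of $\Vhat$ is supported on a single facet, such a local orthogonal basis is available. Writing $(\uhd,\uhat)$ in this basis with coefficients $\mathsf{u}_\ell$, the diagonal operator satisfies
\[
  \left(\bld D_h(\uhd,\uhat), (\uhd, \uhat)\right)_{0,h}
  = \sum_{\ell} \mathsf{u}_\ell^2 \left(\bld A_h \bld e_\ell, \bld e_\ell\right)_{0,h},
\]
where $\bld e_\ell$ denotes the $\ell$-th basis pair. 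The key input is the per-basis estimate \eqref{rd-scale} from Lemma \ref{lemma:vrd1}, namely $(\bld A_h \bld e_\ell, \bld e_\ell)_{0,h}\simeq h^{-2}\|\bld e_\ell\|_{0,h}^2$, which holds for every local basis function by norm equivalence and scaling. Substituting this and invoking basis orthogonality to recombine $\sum_\ell \mathsf{u}_\ell^2\|\bld e_\ell\|_{0,h}^2 \simeq \|(\uhd,\uhat)\|_{0,h}^2$ gives the diagonal equivalence above, whence the result.

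The main obstacle, and the point requiring care, is verifying the per-basis estimate \eqref{rd-scale} genuinely holds for \emph{all} basis functions of the compound space, including the lowest-order Raviart--Thomas functions $\bld\phi_F^0$ whose interaction with the lifting operator $\bld{\mathcal{L}}_h$ must be controlled. One must check that the scaling constants in \eqref{rd-scale} are uniform in $h$ and in $\tau$; the $(1+\tau h^2)$ weight in the volume part of $(\cdot,\cdot)_{0,h}$ was inserted precisely so that the diffusion, reaction, and penalty contributions to $(\bld A_h \bld e_\ell, \bld e_\ell)_{0,h}$ all scale like $h^{-2}\|\bld e_\ell\|_{0,h}^2$ after this normalization. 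A secondary subtlety is that the basis is only $(\cdot,\cdot)_{0,h}$-orthogonal across \emph{distinct} facets/elements; within a single facet one relies instead on finite-dimensional norm equivalence, so the hidden constants depend on the polynomial degree $k$ but not on $h$ or $\tau$, exactly as required. Once these uniformity checks are in place, the argument is otherwise a direct transcription of the scalar proof, and I would simply remark that the symmetric Gauss--Seidel variant satisfies \eqref{asp-as1} as well by spectral equivalence \cite{Zikatanov08}.
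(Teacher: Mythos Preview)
Your proposal is correct and follows the same route as the paper, which simply defers to the scalar argument (Lemma~\ref{lemma:rd2}) combined with the per-basis scaling estimate \eqref{rd-scale} established in Lemma~\ref{lemma:vrd1}. One minor imprecision: the hierarchical $H(\mathrm{div})$ basis functions $\bld\phi_F^0,\bld\phi_F^i$ are supported on the two \emph{elements} adjacent to $F$, not on the single facet, so they are not $(\cdot,\cdot)_{0,h}$-orthogonal across neighboring facets in the volume part of \eqref{vrd-l2}; but your invocation of finite-dimensional norm equivalence (with bounded overlap) already handles this, so no gap remains.
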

\begin{proof}
  See the proof of Lemma \ref{lemma:rd1}.
\end{proof}

\textbf{(iii)} 
The auxiliary space under consideration is the (vectorial) continuous piecewise linear
finite element space $\Vc$ in \eqref{vh10}.
The associated bilinear form $\bld a_{h,0}$ and the linear operator 
$\bld A_{h,0}$
are given as follows:
\begin{align}
  \label{vrd-ah0}
  [\bld A_{h,0}\bld u_0, \bld v_0] = \bld a_{h,0}(
  \bld u_0, \bld v_0) := \int_{\Omega}(\nabla \bld u_0:\nabla\bld v_0+\tau
 \bld u_0\cdot \bld v_0)\mathrm{dx},\quad\quad
  \forall \bld u_0,\bld v_0\in \Vc,
\end{align}
where $[\cdot,\cdot]$ is the usual $L^2$-norm on $\Vc$.
A robust preconditioner $\bld B_{h,0}$ for the operator $\bld A_{h,0}$
can be readily obtained from a vector version of that for the scalar case 
as the operator \eqref{vrd-ah0} is decoupled for each component.
Here we again take $\bld B_{h,0}$ to be hypre's BoomerAMG preconditioner \cite{hypre,
Henson02}.

\textbf{(iv)} We take the operator $\underline{\bld \Pi_h}=(\bld \Pi_h^\partial, 
\widehat{\bld
\Pi}_h):\Vc\rightarrow \Vhd\times \Vhat$ 
to be the following projection:
\begin{subequations}
  \label{vrd-pi}
\begin{align}
  \label{vrd-pi1}
  \sum_{F\in\Eh}
  \int_F
  (\bld\Pi_h^\partial \bld u_o\cdot \bld n)
  (\vhd\cdot\bld n)\mathrm{ds}
  = 
  \sum_{F\in\Eh}
  \int_F
  (\bld u_o\cdot \bld n)
  (\vhd\cdot\bld n)\mathrm{ds},\quad
  \forall \vhd\in \Vhd,\\
  \label{vrd-pi2}
  \sum_{F\in\Eh}
  \int_F
 \tang(\widehat{\bld\Pi}_h \bld u_o)\cdot
 \tang(\vhat)
  \mathrm{ds}
  = 
  \sum_{F\in\Eh}
  \int_F
 \tang(\bld u_o)\cdot
 \tang(\vhat)
\mathrm{ds},\quad
  \forall \vhat\in \Vhat,
\end{align}
\end{subequations}
and the operator $\bld P_h:\Vhd\times\Vhat\rightarrow \Vc$ 
to be 
\begin{align}
 \label{vrd-p}
  \bld P_h(\uhd, \uhat) =S_1(\uh^+),
\end{align}
where recall that $\uh^+:=\uhd+\bld{\mathcal{L}}_h(\uhd, \uhat)$
and $S_1$ is the simple nodal 
average given in \eqref{s1}. 

The following result shows that $\underline{\bld \Pi_h}$ and $\bld P_h$ satisfies both the stability 
and approximation properties needed to apply the ASP theory, Theorem
\ref{thm:asp}.
\begin{lemma}[Properties of $\underline{\bld \Pi_h}$ and $\bld P_h$]
  \label{lemma:vrd3}
  Let the operator $\underline{\bld\Pi_h}$ be given in \eqref{vrd-pi} and 
  $\bld P_h$ be given in \eqref{vrd-p}. Then
  the following inequalities holds
\begin{subequations}
  \label{vrd-proj}
  \begin{align}
  \label{vrd-proj1}
  \|\underline{\bld \Pi_h}\bld u_0\|_{\bld A_h}\lesssim &\;\|\bld u_0\|_{\bld A_{h,0}},\\
  \label{vrd-proj2}
  \|\bld P_h(\uhd, \uhat)\|_{\bld A_{h,0}}\lesssim &\;\|(\uhd, \uhat)
 \|_{\bld A_{h}},\\
  \label{vrd-proj3}
  \|(\uhd,\uhat)-\underline{\bld \Pi_h} \bld P_h (\uhd,\uhat)
 \|_{0,h}\lesssim &\;
    \rho_{\!\!\bld A_h}^{-1/2}
    \|(\uhd,\uhat)\|_{\bld A_{h}}.
  \end{align}
\end{subequations}
\end{lemma}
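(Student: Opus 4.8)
The proof follows the template of Lemma \ref{lemma:rd3} for the scalar case, the only genuine novelty being the bookkeeping forced by the splitting of the compound norm \eqref{vrd-l2} into a volume (normal-flux) part weighted by $(1+\tau h^2)$ and a tangential facet part. Throughout I write $\uh^+:=\uhd+\bld{\mathcal{L}}_h(\uhd,\uhat)$ and use repeatedly that $\rho_{\bld A_h}\simeq h^{-2}$ (Lemma \ref{lemma:vrd1}), the scaling $\|\vhd\|_\Omega^2\simeq\sum_{F\in\Eh}h\|\vhd\cdot\bld n\|_F^2$ from \eqref{vrd-1}, the norm equivalence \eqref{vnorm-eq}, and the nodal-averaging estimate for $S_1$ already invoked in \eqref{rd-proj2}. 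The key structural facts are that the tangential penalty in $\bld a_h$ controls $\sum_F\tfrac1h\|\tang(\uh^+-\uhat)\|_F^2$, that $\uh^+$ has $\Vhos$-valued lift and hence $\Vhd$-component exactly $\uhd$, and that, since the normal-flux test in \eqref{vrd-pi1} coincides with the facet degrees of freedom of the splitting \eqref{vfe} (and $\|\cdot\|_\Omega$ is equivalent to the normal-flux norm on $\Vhd$), the operator $\bld\Pi_h^\partial$ reproduces the $\Vhd$-component of any $H(\mathrm{div})$ field; writing $\bld u_0=\bld u_0^\partial+\bld u_0^{o}$ with $\bld u_0^\partial\in\Vhd$ and $\bld u_0^{o}\in\Vhos$, this gives $\bld\Pi_h^\partial\bld u_0=\bld u_0^\partial$.

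For the prolongation stability \eqref{vrd-proj1} I would compare $\underline{\bld\Pi_h}\bld u_0=(\bld u_0^\partial,\widehat{\bld\Pi}_h\bld u_0)$ with the conforming pair $(\bld u_0^\partial,\tang(\bld u_0))$. Because their volume parts coincide, their difference $(0,\widehat{\bld\Pi}_h\bld u_0-\tang(\bld u_0))$ carries no $(1+\tau h^2)$ volume weight; hence by $\rho_{\bld A_h}\simeq h^{-2}$ and the facet best-approximation bound $\sum_F h\|\tang(\widehat{\bld\Pi}_h\bld u_0-\bld u_0)\|_F^2\lesssim h^2\|\nabla\bld u_0\|_\Omega^2$ one gets $\|\underline{\bld\Pi_h}\bld u_0-(\bld u_0^\partial,\tang(\bld u_0))\|_{\bld A_h}\lesssim\|\nabla\bld u_0\|_\Omega\le\|\bld u_0\|_{\bld A_{h,0}}$. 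The remaining term $\|(\bld u_0^\partial,\tang(\bld u_0))\|_{\bld A_h}\lesssim\|\bld u_0\|_{\bld A_{h,0}}$ follows exactly as \eqref{rd-p2}: by \eqref{vnorm-eq} and the energy-minimization property of the lift \eqref{vrd-L1}, the lift is controlled by the conforming competitor $\bld u_0^{o}\in\Vhos$, whose associated pair $(\bld u_0,\tang(\bld u_0))$ has vanishing tangential jump and $\|\cdot\|_{1,h}$-norm equal to $\|\bld u_0\|_{\bld A_{h,0}}$. A triangle inequality then yields \eqref{vrd-proj1}.

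For the coarse-space stability \eqref{vrd-proj2} I would argue as in \eqref{rd-proj2}, the only new point being that $H(\mathrm{div})$-conformity of $\uh^+$ makes $\jmp{\uh^+}$ purely tangential, so the averaging estimate reads $\|S_1(\uh^+)-\uh^+\|_\Omega^2\lesssim h^2\|\nabla\uh^+\|_{\Th}^2+\sum_{F}h\|\jmp{\tang(\uh^+)}\|_F^2$, and the jump term is bounded through the intermediary $\uhat$ by the tangential penalty energy $\sum_F\tfrac1h\|\tang(\uh^+-\uhat)\|_F^2\lesssim\|(\uhd,\uhat)\|_{\bld A_h}^2$. This gives both $\|S_1(\uh^+)-\uh^+\|_\Omega\lesssim h\|(\uhd,\uhat)\|_{\bld A_h}$ and, by the inverse inequality, $\|S_1(\uh^+)-\uh^+\|_\Omega\lesssim\|\uh^+\|_\Omega$. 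Expanding $\|\bld P_h(\uhd,\uhat)\|_{\bld A_{h,0}}^2=\|\nabla S_1(\uh^+)\|_\Omega^2+\tau\|S_1(\uh^+)\|_\Omega^2$, bounding the gradient by the inverse inequality together with the first estimate, and the reaction term by the second estimate together with $\tau\|\uh^+\|_\Omega^2\le\|(\uhd,\uhat)\|_{\bld A_h}^2$, yields \eqref{vrd-proj2}.

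For the approximation property \eqref{vrd-proj3} I would split $\|(\uhd,\uhat)-\underline{\bld\Pi_h}\bld P_h(\uhd,\uhat)\|_{0,h}$ into its tangential and volume parts, with $\bld w_0:=S_1(\uh^+)$. The tangential part is handled as in \eqref{rd-proj3} by inserting $\tang(\uh^+)$ and $\tang(\bld w_0)$ and invoking the penalty energy, the trace of $\uh^+-\bld w_0$, and the facet projection error of $\widehat{\bld\Pi}_h$. For the volume part, since $\uhd$ and $\bld\Pi_h^\partial\bld w_0$ both lie in $\Vhd$, \eqref{vrd-1} converts it to normal fluxes, and using $\uhd\cdot\bld n=\uh^+\cdot\bld n$ and that $\bld\Pi_h^\partial$ matches the normal flux of $\bld w_0$ leads to $\|\uhd-\bld\Pi_h^\partial\bld w_0\|_\Omega\lesssim\|\uh^+-S_1(\uh^+)\|_\Omega$. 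The main obstacle is the $(1+\tau h^2)$ weight on this term: the clean bound $\|\uh^+-S_1(\uh^+)\|_\Omega\lesssim h\|(\uhd,\uhat)\|_{\bld A_h}$ only gives $(1+\tau h^2)h^2\|(\uhd,\uhat)\|_{\bld A_h}^2$, which is too large once $\tau h^2\gtrsim1$. I would resolve this exactly as in the condition-number estimate of Lemma \ref{lemma:vrd1}, splitting $(1+\tau h^2)\|\uh^+-S_1(\uh^+)\|_\Omega^2$ into the unweighted piece, bounded by $h^2\|(\uhd,\uhat)\|_{\bld A_h}^2$, and the piece $\tau h^2\|\uh^+-S_1(\uh^+)\|_\Omega^2$, which I bound using the stability estimate $\|\uh^+-S_1(\uh^+)\|_\Omega\lesssim\|\uh^+\|_\Omega$ and $\tau\|\uh^+\|_\Omega^2\le\|(\uhd,\uhat)\|_{\bld A_h}^2$ by $h^2\|(\uhd,\uhat)\|_{\bld A_h}^2$. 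Collecting the tangential and volume contributions and recalling $\rho_{\bld A_h}^{-1}\simeq h^2$ gives \eqref{vrd-proj3}.
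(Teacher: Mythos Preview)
Your argument is correct and follows the same route as the paper's proof, which explicitly states it is ``similar to that for Lemma~\ref{lemma:rd3}'' and only sketches the main steps.  The one simplification you miss is that the paper exploits the inclusion $\Vc\subset\Vhd$ (piecewise linear continuous fields lie entirely in the facet part of the hierarchical splitting, since $\bld V_h^{1}=\bld V_h^{1,\partial}$ has no interior bubbles), so $\bld\Pi_h^\partial\bld u_0=\bld u_0$ and your decomposition $\bld u_0=\bld u_0^\partial+\bld u_0^{o}$ is trivial with $\bld u_0^{o}=0$; this streamlines the stability step \eqref{vrd-proj1}.  Your explicit handling of the $(1+\tau h^2)$ weight in \eqref{vrd-proj3} via the two bounds in \eqref{vrd-p3} is exactly what the paper does, only the paper compresses it into a single ``$\lesssim$''.
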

\begin{proof}
  The proof is similar to that for Lemma \ref{lemma:rd3}. Here we only sketch
  the main steps.

  Since $\Vc\subset \Vhd$, we have $\bld \Pi_h^\partial \bld u_0 = \bld u_0$
  for any $\bld u_0\in\Vc$.
Hence,
  \begin{align}
    \label{vrd-p1}
    \|\underline{\bld \Pi_h}\bld u_0-(\bld u_0, \bld u_0)\|_{0,h}
    =
    \|(0, \widehat{\bld \Pi}_h\bld u_0-\bld u_0)\|_{0,h}
    \lesssim
    h\|\nabla \bld u_0\|_{\Omega}.
  \end{align} 
  Next, we can prove 
 \begin{align}
   \label{vrd-p2}
   \|(\bld u_0, \bld u_0)\|_{\bld A_h}\lesssim \|\bld u_0\|_{\bld A_{h,0}},\quad \forall
   \bld u_0\in \Vc.
 \end{align}
Then the inequality \eqref{vrd-proj1} follows from
the above two estimates and $\rho_{\!\bld A_h}\simeq h^{-2}$.

To prove the inequality \eqref{vrd-proj2}, we again use the following result
for the average operator $S_1$
\begin{align*}
  \|\bld P_h(\uhd, \uhat) - \uh^+\|_{\Omega}^2
  \lesssim h^2\|\nabla \uh^+\|_{\Th}^2
  +\left\|\jmp{\uh^+}\right\|_{0,h}^2,
\end{align*}
which implies that 
\begin{align}
  \label{vrd-p3}
  \|\bld P_h(\uhd,\uhat) - \uh^+\|_{\Omega}
  \lesssim \min\left\{\|\uh^+\|_{\Omega},\;\; h\|(\uhd,\uhat)\|_{\bld A_h}\right\}.
\end{align}
Hence,  we have 
\begin{align*}
  \|\bld P_h (\uhd,\uhat)\|_{\bld A_{h,0}}\lesssim &\;
\|\bld P_h(\uhd, \uhat)-\uh^+\|_{\bld A_{h,0}}
  +\|\uh^+\|_{\bld A_{h,0}}
  \lesssim
  \|(\uhd, \uhat)\|_{\bld A_h}.
\end{align*}

Finally, let us prove the approximation property \eqref{rd-proj3}.
The further simply notation, we denote 
$\bld P_h^+:=\bld P_h(\uhd,\uhat)$.
Then, there holds
\begin{align*}
    \|
    (\uhd,\uhat)-\underline{\bld\Pi_h}\bld P_h^+
   \|_{0,h}^2
    \lesssim &\;
    \|(\uhd-\bld P_h^+, \uhat-
    \uh^+
    )
\|_{0,h}^2
+
    \|(0, 
    \uh^+-\bld P_h^+
    )
\|_{0,h}^2
+
    \|
    (\bld P_h^+,\bld P_h^+)-\underline{\bld\Pi_h}\bld P_h^+
   \|_{0,h}^2\\
    \lesssim &\;
    (1+\tau h^2)\|\uhd-\bld P_h^+\|_{\Omega}^2
    +h^{2} \|(\uh^+, \uhat)\|_{\bld A_h}^2
+
    \| 
    \uh^+-\bld P_h^+
\|_{\Omega}^2
+
h^2\|\nabla \bld P_h^+\|_{\Omega}^2,\\
    \lesssim &\;
    (1+\tau h^2)\|\uh^+-\bld P_h^+\|_{\Omega}^2
    +h^{2} \|(\uh^+, \uhat)\|_{\bld A_h}^2
    \lesssim 
    h^{2} \|(\uh^+, \uhat)\|_{\bld A_h}^2,
  \end{align*}
where 
in the third step we used the fact that 
\[
\|\uhd-\bld P_h^+\|_\Omega^2
\simeq
h\sum_{F\in\Eh}\|(\uhd-\bld P_h^+)\cdot \bld n\|_F^2
= 
h\sum_{F\in\Eh}\|(\uh^+-\bld P_h^+)\cdot \bld n\|_F^2
\lesssim 
\|\uh^+-\bld P_h^+\|_{\Omega}^2.
\] 
This completes the sketchy proof.
\end{proof}

\textbf{(v)}
Finally, 
we conclude the following optimality of an auxiliary space 
preconditioner 
by invoking Theorem \ref{thm:asp} and applying 
Lemma \ref{lemma:vrd1}--Lemma \ref{lemma:vrd3}.
\begin{theorem}[ASP for the condensed HDG operator \eqref{vrd-ah}]
  \label{thm:vrd}
  Let 
  $\bld B_h
  =\bld R_h+\underline{\bld \Pi_h}\bld B_{h,0}
  \underline{\bld \Pi_h}^t
  $ be the auxiliary space preconditioner for the operator 
  $\bld A_h$ in \eqref{vrd-ah} with 
  $\bld R_h$ being the Jacobi smoother for $\bld A_h$, 
  $\bld B_{h,0}$ being an optimal preconditioner for $\bld A_{h,0}$ in \eqref{vrd-ah0} such
  that 
  $
  \kappa (\bld B_{h,0}\bld A_{h,0})\simeq 1,
  $
  and $\underline{\bld \Pi_h}$ being the projector in \eqref{vrd-pi}. 
  Then, 
  $
  \kappa (\bld B_{h}\bld A_{h})\simeq 1.
  $
\end{theorem}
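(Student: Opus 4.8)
The plan is to treat this exactly as the concluding bookkeeping step of the ASP construction outlined in Remark \ref{rk1}(v): verify that each of the five structural hypotheses \eqref{asp-as1}--\eqref{asp-as5} of the abstract result Theorem \ref{thm:asp} is met with constants independent of both $h$ and $\tau$, and then read off the condition-number bound. The analytic content has all been established in the preceding lemmas, so the remaining work is purely to match each hypothesis to its source and to confirm the uniformity of the hidden constants. The argument is the verbatim analogue of the one behind Theorem \ref{thm:rd} for the scalar case.

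Concretely, I would identify the ingredients of Theorem \ref{thm:asp} with the present objects via $\mathcal V=\Vhd\times\Vhat$, $\mathcal V_0=\Vc$, $A=\bld A_h$, $A_0=\bld A_{h,0}$, $R=\bld R_h$, $\Pi=\underline{\bld\Pi_h}$, $P=\bld P_h$, and $(\cdot,\cdot)=(\cdot,\cdot)_{0,h}$, with $\underline{\bld\Pi_h}^t$ the adjoint with respect to this compound inner product. Then: assumption \eqref{asp-as1} on the smoother is precisely the spectral equivalence $(\bld R_h(\uhd,\uhat),(\uhd,\uhat))_{0,h}\simeq\rho_{\!\!\bld A_h}^{-1}\|(\uhd,\uhat)\|_{0,h}^2$ of Lemma \ref{lemma:vrd2}, giving $\alpha_0\simeq\alpha_1\simeq1$; assumption \eqref{asp-as2} is the hypothesis $\kappa(\bld B_{h,0}\bld A_{h,0})\simeq1$ placed in the statement, giving $\lambda_0\simeq\lambda_1\simeq1$; and the three inequalities of Lemma \ref{lemma:vrd3} supply the rest after squaring, namely \eqref{vrd-proj1} yields \eqref{asp-as3} with $\beta_1\simeq1$, \eqref{vrd-proj2} yields \eqref{asp-as4} with $\beta_0\simeq1$, and \eqref{vrd-proj3} yields \eqref{asp-as5} with $\gamma_0\simeq1$, where the weight $\rho_{\!\!\bld A_h}^{-1/2}$ in \eqref{vrd-proj3} is exactly the $\rho_A^{-1/2}$ demanded by \eqref{asp-as5} since $\rho_{\!\!\bld A_h}\simeq h^{-2}$ by Lemma \ref{lemma:vrd1}. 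Feeding these into Theorem \ref{thm:asp} gives
\[
  \kappa(\bld B_h\bld A_h)\le(\alpha_1+\beta_1\lambda_1)\big((\alpha_0\gamma_0)^{-1}+(\beta_0\lambda_0)^{-1}\big)\simeq1,
\]
which is the assertion.

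Since the hard estimates are inherited, the only point that genuinely requires care — rather than a real obstacle — is confirming that every hidden constant above is uniform in the reaction parameter $\tau$ and not merely in the mesh size $h$. This is the whole purpose of the $\tau$-weighted inner product \eqref{vrd-l2}: the factor $(1+\tau h^2)$ is what forces $\rho_{\!\!\bld A_h}\simeq h^{-2}$ to hold uniformly in $\tau$ in Lemma \ref{lemma:vrd1} and keeps the stability and approximation constants of Lemma \ref{lemma:vrd3} $\tau$-independent, so that the cancellation of $\rho_{\!\!\bld A_h}$ between \eqref{asp-as1} and \eqref{asp-as5} leaves a final bound free of both $h$ and $\tau$. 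I would therefore state the proof as a one-line invocation of Theorem \ref{thm:asp} together with Lemmas \ref{lemma:vrd1}--\ref{lemma:vrd3}, while flagging the $\tau$-uniformity of the constants as the substantive content delivered by those lemmas.
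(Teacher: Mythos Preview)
Your proposal is correct and follows exactly the paper's approach: the paper simply states that the result follows by invoking Theorem \ref{thm:asp} and applying Lemmas \ref{lemma:vrd1}--\ref{lemma:vrd3}, and you have spelled out precisely how each hypothesis \eqref{asp-as1}--\eqref{asp-as5} is matched to the corresponding lemma. Your remark on the role of the $\tau$-weighted inner product \eqref{vrd-l2} in securing uniformity is a helpful elaboration, but the underlying argument is identical.
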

\begin{remark}[On reduced divergence-conforming space]
 All  the above results still 
 hold if we replace the divergence-conforming space $\Vh$ in 
 the HDG scheme
 \eqref{hdg-vrd} by the reduced divergence-conforming space 
 $\Vhr$ in \eqref{vl2vc0}. The only change would be a smaller local space 
 $\Vho$ compared with the original local space $\Vhos$ for the scheme
 \eqref{hdg-vrd}.
 While such a modification leads to a smaller set of total DOFs, it  is not suggested for the HDG discretization of the
 current reaction-diffusion equation \eqref{vrd-eq}
 as it leads to accuracy loss compared with the original 
 HDG scheme \eqref{hdg-vrd}.
 On the other hand,
 the reduced system would serve as a good  preconditioner
 for the velocity block of a divergence-free HDG discretization of
 incompressible flow.
 It can also be used to precondition a 
 $C^0$-continuous interior penalty HDG scheme for 
 the generalized biharmonic equation as we show next.
\end{remark}

\subsection{$C^0$-continuous interior penalty HDG for the generalized biharmonic problem}
\label{sec:bh}
\subsubsection{The model and the HDG scheme} We consider the following
constant-coefficient generalized biharmonic equation in a convex polygonal
domain 
$\Omega \subset \mathbb{R}^2$ with a simply supported boundary condition:
\begin{align}
  \label{bh-eq}
  -\triangle^2 \phi - \tau \triangle \phi = f \;\; \text{in } \Omega,
\quad  \;\;\phi|_{\partial \Omega}=\triangle\phi|_{\partial \Omega} = 0.
\end{align}
Calculus identities show that 
\[
  \triangle^2 \phi = \nabla\times(\nabla \cdot \nabla (\vv{\nabla}
  \times\phi)),\quad
  \triangle \phi = \nabla\times (\vv{\nabla}\times \phi),
\] 
where $\vv{\nabla}\times \phi :=(\partial_y\phi, -\partial_x\phi)$
is the {\it vectorial} curl operator (rotated gradient) for a scalar field $\phi$ and 
 ${\nabla}\times (v_1, v_2) :=\partial_x v_2-\partial_y v_1$
 is the {\it scalar} curl operator (rotated divergence) for the vector field
 $(v_1,v_2)$.
 Also, for any function $\phi$ that vanishes on the boundary $\partial \Omega$,
 there holds
 \[
   \triangle \phi|_{\partial\Omega}
 = \partial_{n}\partial_{n}\phi|_{\partial\Omega}
   = 
   \left(((\nabla \vv{\nabla}\times \phi)\bld n)\cdot\bld
   t\right)|_{\partial\Omega} = 0,
 \] 
 where $\partial_n$ is the directional derivative along the normal direction
 $\bld n=(n_1, n_2)$, 
and $\bld t = (n_2, -n_1)$ is the tangential direction.
Hence, the equation \eqref{bh-eq} is identical to the following form:
\begin{align}
  \label{bh-eq1}
  -\nabla\times(\nabla \cdot \nabla (\vv{\nabla}
  \times\phi))
  -\tau \nabla\times (\vv{\nabla}\times \phi)=f
  \;\; \text{in } \Omega,
\quad  \;\;\phi|_{\partial \Omega}=
   \left(((\nabla \vv{\nabla}\times \phi)\bld n)\cdot\bld
   t\right)|_{\partial\Omega} = 0.
\end{align}
Next, we present a $C^0$-continuous interior penalty HDG (CIP-HDG) discretization of the
equation \eqref{bh-eq1}. We didn't find in the literature on publications on 
CIP-HDG formulations for the generalized biharmonic equation, but it can be
easily adapted from a known CIP-DG scheme \cite{Brenner05}.
We mention that the implementation of a version of the CIP-HDG scheme for the
biharmonic equation was already used in unit 2.9 of the {\it i-tutorials}
of the NGSolve software \cite{ngsolve}.

Given a polynomial degree $k\ge 1$, 
the CIP-HDG scheme with projected jumps reads as follows:
find $(\phi_h, \widehat{\bld u}_h)\in \Sh\times \Vhato$ such that 
  \begin{align}
  \label{hdg-bh}
  k_h\left((\phi_h, \widehat{\bld u}_h), (\psi_h, \widehat{\bld v}_h)
    \right) = \int_{\Omega} f\,\psi_h\mathrm{dx},\quad
    \forall (\psi_h, \widehat{\bld v}_h)\in 
    \Sh\times \Vhato,
  \end{align}
where the bilinear form 
\begin{align*}
  k_h((\phi_h, \uhat), (\psi_h, \vhat))
  := \sum_{K\in\Th}&\; \int_{K}(\nabla \vph:\nabla \vqh+\tau \vph\cdot
  \vqh)\mathrm{dx}\\
                   &\;
  -\int_{\partial K} (\nabla \vph) \bld n\cdot \mathsf{tang}(\vqh-\vhat)\mathrm{ds}\\
  &\;
  -\int_{\partial K} (\nabla \vqh)\bld n\cdot\mathsf{tang}(\vph-\uhat)\mathrm{ds}
\\&
\;+\int_{\partial K}\frac{\alpha k^2}{h} 
  \mathsf{tang}(\bld P_{k-1} \vph-\uhat)\cdot\tang(\bld P_{k-1}
  \vqh-\vhat)\mathrm{ds},
\end{align*}
where $\uhat\in \Vhato$ is the approximation of the tangential component of 
$\vph$ on the mesh skeleton.
Note that we do not impose any boundary constraints on the tangential facet finite
element space $\Vhato$ to respect the simply supported boundary condition in
\eqref{bh-eq1}. 
Here, again, we take the stability parameter $\alpha=4$ 
to ensure the coercivity. In particular, the following norm equivalence holds
\begin{align}
\label{hdg-bh-norm}
  k_h((\ph, \uhat),(\ph, \uhat))\simeq
  \|(\ph, \uhat)\|_{1,h}^2, 
\end{align} 
where 
\[
  \|(\ph, \uhat)\|_{1,h}^2:=\sum_{K\in\Th}(\|\nabla \vph\|_K^2
  +\tau \|\vph\|_K^2
  +\frac{1}{h}\|\vph-\uhat\|_{\partial K}^2)
\]
is a norm on $\Sh\times \Vhato$.
A direct comparison of the CIP-HDG bilinear form $k_h$ in \eqref{hdg-bh}
with the divergence-conforming HDG bilinear form $\bld a_h$
in \eqref{hdg-vrd} implies that 
\begin{align}
  \label{form-eq}
  k_h((\ph, \uhat),(\qh, \vhat))
  =
  \bld a_h((\vph, \uhat),(\vqh, \uhat)),
\end{align}
for all 
  $(\ph, \uhat), (\qh, \vhat)\in \Sh\times \Vhato$.

\subsubsection{The auxiliary space preconditioner}
Here we directly construct the auxiliary space preconditioner
for the system \eqref{hdg-bh} without static condensation.
The analysis with static condensation is similar the previous 
subsection and we omit it to avoid unnecessary repetition.
In our numerical implementation, static condensation is of course activated 
to improve efficiency of the solver.
The key idea is to use the  divergence-conforming HDG scheme 
\eqref{hdg-vrd} for
reaction-diffusion to precondition the scheme \eqref{hdg-bh}, see the original
idea in \cite{Ayuso14}. 
The obtained preconditioner is actually a fictitious space preconditioner
\cite{N91} as no smoother is involved in the preconditioner.

We define the HDG operator $K_h:
\Sh\times \Vhato\rightarrow\Sh\times \Vhato$ as 
\begin{align}
  \label{bh-ah}
  \left[K_h(\ph, \uhat), (\qh, \vhat)\right]_{0,h}:=
    k_h\left(
      (\ph, \uhat), 
      (\qh
     , \vhat) 
  \right),
\end{align}
where $[\cdot,\cdot]_{0,h}$ is an inner product on $\Sh\times \Vhato$.
The auxiliary space to be considered is 
$
\Vhr\times \Vhato.
$ 
With an abuse of notation, we denote 
$\bld A_h:\Vhr\times \Vhato\rightarrow \Vhr\times \Vhato$ as the linear operator
associated with the bilinear form $\bld a_h$ in \eqref{hdg-vrd}:
\begin{align}
  \label{ah-2}
  \left(\bld A_h(\uh, \uhat), (\vh, \vhat)\right)_{0,h}:=
    \bld  a_h\left(
      (\uh, \uhat), 
      (\vh, \vhat)
    \right)
      ,\quad 
      \forall 
      (\uh, \uhat), 
      (\vh, \vhat)\in \Vhr\times \Vhato 
\end{align}
An optimal preconditioner for $\bld A_h$ was already constructed in Theorem 
\ref{thm:vrd}.
We further define the following mapping operator 
$\underline{\mathsf{\Pi}_h}=(\mathsf{\Pi}_h, \widehat{\mathsf{\Pi}}_h):\Vhr\times \Vhato\rightarrow
\Sh\times \Vhato$:
\begin{subequations}
  \label{bh-pi}
\begin{align}
  \label{bh-pi1}
\int_{\Omega}
(\vv{\nabla}\times
\mathsf{\Pi}_h(\uh, \uhat) \cdot 
\vqh
  \mathrm{dx}
  = 
\int_{\Omega}
(\uh \cdot 
\vqh)
  \mathrm{dx}
  ,\quad
  \forall \qh\in \Sh,\\
  \label{bh-pi2}
  \sum_{K\in\Th}
  \int_{\partial K}
  \tang(\widehat{\mathsf{\Pi}}_h(\uh, \uhat)
  -{\mathsf{\Pi}}_h(\uh, \uhat)
  )\cdot
 \tang(\vhat)
  \mathrm{ds}
  =0, \quad 
  \forall \vhat\in \Vhato.
\end{align}
\end{subequations}
Here \eqref{bh-pi2} implies that 
\begin{align}
  \label{phat}
  \tang(\widehat{\mathsf{\Pi}}_h(\uh, \uhat))
  =\tang(\bld P_{k-1}\avg{\mathsf{\Pi}_h(\uh, \uhat)}),
\end{align}
where $\avg{\mu}$ is the average operator on interior facets of a function
$\mu$,
and $\avg{\mu}|_{\partial \Omega} = \mu$.

The main result of this subsection is now summarized below.
\begin{theorem}[ASP for the HDG operator \eqref{bh-ah}]
  \label{thm:bh}
  Let $M_h= \underline{\mathsf{\Pi}_h}\bld B_{h}
  \underline{\mathsf{\Pi}_h}^t
  $ 
  be the auxiliary space preconditioner for the operator $K_h$ in
  \eqref{bh-ah} with $\underline{\mathsf{\Pi}_h}$ given in \eqref{bh-pi}
  and $B_h$ being an optimal preconditioner for $A_h$ in \eqref{ah-2}.
  Then, $\kappa(M_hK_h)\simeq 1$.
\end{theorem}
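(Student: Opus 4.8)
The plan is to read $M_h=\underline{\mathsf{\Pi}_h}\bld B_h\underline{\mathsf{\Pi}_h}^t$ as the purely prolongation-based (fictitious space) instance of the preconditioner \eqref{asp-prec}, i.e. with vanishing smoother $R=0$, and to invoke the second conclusion of Theorem \ref{thm:asp}. Concretely I take the fine space $\mathcal V=\Sh\times\Vhato$ with operator $A=K_h$, the auxiliary space $\mathcal V_0=\Vhr\times\Vhato$ with operator $A_0=\bld A_h$ from \eqref{ah-2}, prolongation $\Pi=\underline{\mathsf{\Pi}_h}$, and coarse solver $B_0=\bld B_h$. Because no smoother is present the general bound is useless (its first term carries $(\alpha_0\gamma_0)^{-1}$ with $\alpha_0=0$); everything must instead come from the sharp estimate $\kappa(M_hK_h)\le(\beta_1/\beta_0)(\lambda_1/\lambda_0)$, which is available precisely when $\underline{\mathsf{\Pi}_h}$ admits a bounded right inverse. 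Thus the proof reduces to three ingredients: a bounded right inverse of $\underline{\mathsf{\Pi}_h}$ (giving $\beta_0$), the prolongation bound \eqref{asp-as3} (giving $\beta_1$), and the optimality \eqref{asp-as2} of $\bld B_h$ (giving $\lambda_1/\lambda_0$).

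For the right inverse I would use the discrete curl, $P(\ph,\uhat):=(\vph,\uhat)$, which lands in $\Vhr\times\Vhato$ since $\vph=\vv{\nabla}\times\ph\in V_{h,0}^{k,\mathrm{zero}}\subset\Vhr$. The decisive fact is the exact energy identity furnished by \eqref{form-eq}: $\|P(\ph,\uhat)\|_{\bld A_h}^2=\bld a_h((\vph,\uhat),(\vph,\uhat))=k_h((\ph,\uhat),(\ph,\uhat))=\|(\ph,\uhat)\|_{K_h}^2$, so that $P$ is an isometry and \eqref{asp-as4} holds with $\beta_0=1$. To verify $\underline{\mathsf{\Pi}_h}P=\mathrm{Id}$ I would substitute the pair $(\vph,\uhat)$ into the definition \eqref{bh-pi}: in \eqref{bh-pi1} the datum $\vph$ already lies in the range of $\vv{\nabla}\times(\cdot)$, so its $L^2$-projection onto that range is itself and the recovered stream function is $\mathsf{\Pi}_h(\vph,\uhat)=\ph$; the facet relation \eqref{bh-pi2} then transfers the facet argument unchanged, yielding $\widehat{\mathsf{\Pi}}_h(\vph,\uhat)=\uhat$. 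Hence $P$ is a genuine right inverse of $\underline{\mathsf{\Pi}_h}$.

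The remaining, and genuinely hard, step is the prolongation bound $\|\underline{\mathsf{\Pi}_h}(\uh,\uhat)\|_{K_h}^2\lesssim\|(\uh,\uhat)\|_{\bld A_h}^2$. Writing $\vph^\ast:=\vv{\nabla}\times\mathsf{\Pi}_h(\uh,\uhat)$ for the $L^2$-projection of $\uh$ onto the discrete divergence-free space $V_{h,0}^{k,\mathrm{zero}}$ and using \eqref{form-eq} once more, the left-hand side is equivalent to $\|\nabla\vph^\ast\|_{\Th}^2+\tau\|\vph^\ast\|_\Omega^2$ plus facet terms. The reaction term is immediate since the $L^2$-projection is a contraction, and the facet terms are handled by the same scaling and inverse-inequality bookkeeping used in Lemma \ref{lemma:vrd3}; the crux is the gradient term $\|\nabla\vph^\ast\|_{\Th}^2\lesssim\|\nabla\uh\|_{\Th}^2$, i.e. the $H^1$-stability of the $L^2$-projection onto the discrete divergence-free subspace. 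This is a discrete Helmholtz-stability estimate, and it is exactly the place where the convexity of $\Omega$ and the simply supported boundary condition are indispensable: following the Stokes argument of \cite{Ayuso14}, one controls $\nabla\vph^\ast$ by appealing to the $H^2$-regularity of the associated (biharmonic/Stokes) problem on convex domains, a device that is unavailable for the clamped condition (as flagged in the introduction) and is the source of the restriction to the simply supported case.

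Finally I would collect the constants. Theorem \ref{thm:vrd} provides an optimal $\bld B_h$ for $\bld A_h$, so \eqref{asp-as2} holds with $\lambda_1/\lambda_0\simeq1$; the two preceding steps give $\beta_0=1$ and $\beta_1\lesssim1$. Since $\underline{\mathsf{\Pi}_h}P=\mathrm{Id}$, the sharp conclusion of Theorem \ref{thm:asp} yields $\kappa(M_hK_h)\le(\beta_1/\beta_0)(\lambda_1/\lambda_0)\simeq1$, which is the assertion.
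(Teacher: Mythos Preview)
Your proposal is correct and mirrors the paper's proof: the same right inverse $\mathsf{P}_h(\ph,\uhat)=(\vph,\uhat)$, the same isometry via \eqref{form-eq}, and the same appeal to the second conclusion of Theorem \ref{thm:asp} once the prolongation bound is established through \cite{Ayuso14}. The only refinement is that the paper does not separate the facet jump terms as ``easy bookkeeping'': both the broken gradient \emph{and} the tangential jump of $\uh^*-\uh$ are controlled simultaneously by Lemma 5.1 of \cite{Ayuso14} (bounded by $\|\nabla\cdot\uh\|_\Omega^2$), after which a triangle inequality against $\uh$ finishes the stability estimate.
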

\begin{proof}
  Taking $\mathsf{P}_h:\Sh\times \Vhato\rightarrow \Vhr\times \Vhato$ as the following
  natural inclusion:
  \[
    \mathsf{P}_h(\ph, \uhat) := (\vph, \uhat),\quad \forall (\ph, \uhat)\in
\Sh\times \Vhato.
  \]
  It is trivial to show that 
  \[
    \|\mathsf{P}_h(\ph, \uhat)\|_{\bld A_h} =
    \|(\ph, \uhat)\|_{K_h},
  \] 
  and that
  $\mathsf{P}_h$ is a right inverse of 
  $\underline{\mathsf{\Pi}_h}$:
  \[
    \underline{\mathsf{\Pi}_h}\mathsf{P}_h(\ph, \uhat)
    =(\ph, \uhat).
  \]
  Hence, we only need to prove the following
  stability of the  projector $\underline{\mathsf{\Pi}_h}$:
  \begin{align}
    \label{stab-p}
    \|
    \underline{\mathsf{\Pi}_h}(\uh, \uhat)
    \|_{K_h}\lesssim 
    \|(\uh, \uhat)
    \|_{\bld A_h}
  \end{align}
  for the optimality of the preconditioner $M_h$.
  To simplify notation, we denote 
  $\uh^*:=\vv{\nabla}\times{\mathsf{\Pi}_h}(\uh, \uhat)$.
  Hence, $$\tang(\widehat{\mathsf{\Pi}}_h(\uh, \uhat))=
  \tang(\bld P_{k-1}\avg{\uh^*})$$ by \eqref{phat}.
  We then have, by \eqref{hdg-bh-norm}, 
  \begin{align}
    \label{ss}
    \|
    \underline{\mathsf{\Pi}_h}(\uh, \uhat)
    \|_{K_h}\lesssim
  \sum_{K\in\Th}(\|\nabla \uh^*\|_K^2
  +\tau \|\uh^*\|_K^2
  +\frac{1}{h}\|\tang(\jmp{\uh^*})\|_{\partial K}^2),
  \end{align}
  where the jump $\jmp{\cdot}$ on the boundary facet $F\in\partial\Omega$ 
  is set to be {\it zero}.
  By Lemma 5.1 of \cite{Ayuso14}, we have
  \begin{align}
    \label{xxx}
    \sum_{K\in\Th}(\|\nabla (\uh^*-\uh)\|_K^2
  +\frac{1}{h}\|\tang(\jmp{\uh^*-\uh})\|_{\partial K}^2)\lesssim \|\nabla
  \cdot\uh\|_{\Omega}.
  \end{align}
  Taking $\vqh = \uh^*$ in \eqref{bh-pi1}
  and applying the Cauchy-Schwarz inequality, we further have 
  \[
    \|\uh^*\|_{\Omega}\le \|\uh\|_{\Omega}.
  \] 
  Combining the above two estimates with the inequality \eqref{ss}
  and a triangle inequality, we conclude
the  proof for   the stability result \eqref{stab-p}.
\end{proof}
\begin{remark}[On boundary conditions]
Here we remark that the assumption on the simply supported boundary condition
\eqref{bh-eq1} is crucial to our
analysis, as the result \eqref{xxx} no longer holds true if we have 
a clamped boundary condition, 
$\phi|_{\partial\Omega}=
\partial_n\phi|_{\partial \Omega}=0$.
In this case, 
we shall use $\Vhat$  as 
the facet finite element space in the CIP-HDG scheme \eqref{hdg-bh}
to respect the clamped boundary condition.
Then, we have 
$$
\tang(\widehat{\mathsf{\Pi}}_h(\uh, \uhat))|_{\partial\Omega}
  =0\not =
  \tang(\bld P_{k-1}{\mathsf{\Pi}}_h(\uh, \uhat))|_{\partial\Omega}
$$ due to the homogeneous boundary condition on $\Vhat$.
Hence the right hand side of the inequality \eqref{ss}
shall also include the boundary contribution
$
  \sum_{F\in\Eh^\partial}\frac1h\|\tang(\uh^*)\|_F^2,
$ 
which, however, can not be controlled by $\|(\uh,\uhat)\|_{\bld A_h}^2$, 
see the proof of \cite[Lemma 5.1]{Ayuso14}.

Our numerical results also indicate that Theorem \ref{thm:bh} 
fails to hold for the clamped boundary condition case as the
preconditioner is no longer optimal with respect to the mesh size $h$.
This is to be contrasted to the results in the 
previous two subsections, where we considered 
the homogeneous Dirichlet boundary condition only for the purpose of simplicity
of the presentation as the main results therein still  hold if we replace 
the Dirichlet boundary condition by other standard boundary conditions.

As can be seen clearly from the analysis of Theorem \ref{thm:bh}, we only need
to prove stability of the projector \eqref{stab-p} for the optimality of the
associated preconditioner. However, we are not able to construct an
easy-to-compute projector that achieve this. In a forthcoming paper, we will address
the preconditioning issue of \eqref{hdg-bh} with a clamped boundary condition
using a different technique.
\end{remark}

\begin{remark}[On computational cost of the preconditioner $M_h$]
The main computation cost to computer the projector 
$\underline{\mathsf{\Pi}_h}$
is the  Poisson solver \eqref{bh-pi1} on the space $\Sh$.
If we take $\bld B_h$ to be the ASP developed in Theorem \ref{thm:vrd} for 
$\bld A_{h}$, then it involves a Poisson-like solver for the 
operator $\bld A_{h,0}$ \eqref{vrd-ah0} on the space $\Vc$, which is equivalent
to two scalar Poisson-like solvers. 
Hence, an application of 
the preconditioner $M_h$ would involve two Poisson solvers on 
the high-order $H^1$-conforming space
$\Sh$ (one for 
$\underline{\mathsf{\Pi}_h}$
and one for $\underline{\mathsf{\Pi}_h}^t$) and two Poisson-like solver on the
low-order $H^1$-conforming space $\Wc$.

On the other hand, if $\tau\ge h^{-2}$, we can take 
$\bld B_h$ as the simple Jacobi preconditioner for $\bld A_h$ which is optimal
due to Lemma \ref{lemma:vrd1}. In this case, 
the cost of an application of the preconditioner $M_h$ would be two Poisson solvers on 
the high-order $H^1$-conforming space
$\Sh$.
\end{remark}

\begin{remark}[Equivalence of CIP-HDG for biharmonic equation with 
  divergence-free HDG for Stokes flow]
  It is well-known that the CIP-DG scheme \cite{Brenner05} for biharmonic equation
is equivalent to a divergence-free DG scheme
for Stokes flow, see \cite{Kanschat14}.
The same conclusion can also be made for the current 
CIP-HDG scheme \eqref{hdg-bh} and the 
divergence-free HDG scheme for a generalized Stokes flow \cite[Chapter 2]{Lehrenfeld10}.
Hence, we also obtained a robust preconditioner for 
the divergence-free HDG scheme for the generalized Stokes flow with a slip
boundary condition.
\end{remark}

\section{Numerical results}
\label{sec:num}
In this section we perform some numerical experiments for the preconditioners
discussed in the previous section.
All numerical examples are performed using the NGSolve software \cite{ngsolve}.
Full code examples are available at {\url{www.github.com/gridfunction/asp-hdg/}}.

For the symmetric interior penalty HDG scheme \eqref{hdg-rd}, we take the domain 
to be a unit cube $\Omega^{\mathsf{3d}}=[0,1]^3$ with two subdomains 
$\Omega_1^{\mathsf{3d}} = [0.25,0.5]^3\cup [0.5,0.75]^3$ and 
$\Omega_2^{\mathsf{3d}} = \Omega^{\mathsf{3d}}\backslash\Omega_1^{\mathsf{3d}}$;
for the divergence-conforming HDG scheme \eqref{hdg-vrd} and 
the CIP-HDG scheme \eqref{hdg-bh}, we take the domain to be a unit square
$\Omega^{\mathsf{2d}}=[0,1]^2$ with two subdomains 
$\Omega_1^{\mathsf{2d}} = [0.25,0.5]^2\cup [0.5,0.75]^2$ and 
$\Omega_2^{\mathsf{2d}} = \Omega^{\mathsf{2d}}\backslash\Omega_1^{\mathsf{2d}}$;
In all the examples, we take the parameter $\tau=\tau_1$ on subdomain $\Omega_1$
and $\tau= \tau_2$ on subdomain $\Omega_2$
with the constants $\tau_1,\tau_2\in\{1, 10^4\}$.
We take the 
right hand side $f=1$, and report the number of PCG iterations
need to reduce the relative residual by a factor of $10^{-10}$.
We use a regular simplicial mesh by first dividing the domain into 
uniform $N\times N$ squares in 2D or $N\times N\times N$ cubes in 3D, then 
split the square into two triangles or the cube into 6 tetrahedra.
$N=8$ for the coarsest mesh under consideration.
We take polynomial degree $k\in\{1,4,7,10\}$.

\newcommand{\jasp}{$\mathsf{JAC\text{-}ASP}$}
\newcommand{\gasp}{$\mathsf{BGS\text{-}ASP}$}
\newcommand{\dasp}{$\mathsf{ASP\text{-}DIR}$}
\newcommand{\aasp}{$\mathsf{ASP\text{-}ASP}$}

For the scalar and vectorial reaction diffusion problems in \eqref{hdg-rd} and 
\eqref{hdg-vrd}, 
we focus on the ASP in Theorems \ref{thm:rd}--\ref{thm:vrd} with a point Jacobi smoother, 
abbreviated as $\mathsf{JAC\text{-}ASP}$, and the ASP with a block symmetric 
Gauss-Seidel smoother using an {\it overlapping} 
facet-patch based block, abbreviated as $\mathsf{BGS\text{-}ASP}$.
For the block preconditioner, the total number of blocks is the total
number of (interior) facets on the mesh, with each block being associated with a 
facet $F$ that contains 
the global DOFs which are adjacent to $F$, i.e., it 
contains the collection of all global DOFs on the facet $F'$
such that $F'$ and $F$ are shared by a common simplicial element $K$.
For the generalized biharmonic problem \eqref{hdg-bh}, we focus on 
the ASP in Theorem \ref{thm:bh} with a either 
direct solver $\bld B_h$, which is abbreviate as 
$\mathsf{ASP\text{-}DIR}$,
or the ASP with a block symmetric Gauss-Seidel smoother 
in Theorem \ref{thm:vrd}, which is abbreviated as 
$\mathsf{ASP\text{-}ASP}$. 

The iteration counts 
for the HDG scheme \eqref{hdg-rd} are recorded in Table \ref{tab:1},
those for the divergence-conforming HDG scheme \eqref{hdg-vrd} are recorded in Table \ref{tab:2},
and for the CIP-HDG scheme \eqref{hdg-bh} in Table \ref{tab:3} for
the simply supported boundary case and in Table \ref{tab:4} for the clamped
boundary case.

From Table \ref{tab:1}, we observe that the iteration counts are essentially
independent of the mesh size $h=1/N$ for a fixed polynomial degree and reaction
parameter $\tau$ for both 
{\jasp} and \gasp, which verify the result of 
Theorem \ref{thm:rd}.
The iteration counts are also quite robust with respect to the variations in
$\tau$, where the case $\tau_1=\tau_2=10^4$ records the smallest number of
iterations for all tests. Due to a large reaction coefficient 
in this case,  the HDG operator \eqref{rd-ah} itself is well-conditioned, see
Remark \ref{rk:3.1}.
Moreover,  the growth of iteration counts on polynomial degree {\jasp}
seems to be linear, while that for {\gasp} is very mild where they are  almost 
independent of the polynomial degree.

The results in Table \ref{tab:2} for the divergence-conforming HDG scheme 
\eqref{hdg-vrd} are similar to
those for Table \ref{tab:1}, which numerically verify the result of 
Theorem \ref{thm:vrd}.

Finally, we also obtain similar results in Table \ref{tab:3} for the simply
supported boundary condition case, where 
we notice that switching from a direct solver in {\dasp} for the auxiliary 
operator $\bld A_h$ to the ASP in {\aasp}
only leads to a small increase on the iteration counts.
On the other hand, the results in Table \ref{tab:4} for the clamped boundary
condition case show a mesh dependency for iterations counts, especially for the
case with $\tau_1=\tau_2=1$. This suggest that the proposed preconditioner is
not robust for the clamped boundary condition case.

\begin{table}[ht]
\centering 
\resizebox{0.9\columnwidth}{!}
{%
\begin{tabular}{cc| cc|cc|cc|cccc}
\toprule
$k$& $N$  
   &\multicolumn{2}{c|}{
  $\tau_1=1, \tau_2=1$}
  &
  \multicolumn{2}{c|}{
  $\tau_1=1, \tau_2=10^4$}
  &
  \multicolumn{2}{c|}{
  $\tau_1=10^4, \tau_2=1$}
  &
  \multicolumn{2}{c}{
  $\tau_1=10^4, \tau_2=10^4$}
\tabularnewline
\midrule
  & 
  &\jasp &\gasp
  &\jasp &\gasp
  &\jasp &\gasp
  &\jasp &\gasp
  \\
\midrule
\multirow{4}{2mm}{1} 
    & 8&54  & 15&57  & 17&34  & 11&12  & 8 \\   
   &  16&55  & 17&58  & 19&50  & 16&19  & 10 \\ 
   &  32&53  & 17&59  & 19&58  & 16&41  & 11  \\
   &  64&51  & 16&55  & 17&55  & 17&46  & 12  \\
 \midrule
\multirow{4}{2mm}{4} 
    & 8&94  & 20&99  & 21&78  & 16&43  & 12  \\   
   &  16&91  & 20&98  & 21&91  & 19&58  & 13  \\ 
   &  32&86  & 20&95  & 20&93  & 19&69  & 15  \\
 \midrule
\multirow{2}{2mm}{7} 
    & 8 &142  & 23&149  & 24&124  & 19&75  & 15  \\   
   &  16&137  & 23&148  & 23&138  & 21&97  & 17  \\ 
\bottomrule
\end{tabular}}
\vspace{2ex}  
\caption{\it \textbf{3D scalar reaction-diffusion.} 
  Number of PCG iteration counts for the 
 interior penalty HDG scheme \eqref{hdg-rd}.} 
\label{tab:1} 
\end{table}

\begin{table}[ht]
\centering 
\resizebox{0.9\columnwidth}{!}
{%
\begin{tabular}{cc| cc|cc|cc|cccc}
\toprule
$k$& $N$  
   &\multicolumn{2}{c|}{
  $\tau_1=1, \tau_2=1$}
  &
  \multicolumn{2}{c|}{
  $\tau_1=1, \tau_2=10^4$}
  &
  \multicolumn{2}{c|}{
  $\tau_1=10^4, \tau_2=1$}
  &
  \multicolumn{2}{c}{
  $\tau_1=10^4, \tau_2=10^4$}
\tabularnewline
\midrule
  & 
 &\jasp
  &\gasp
  &\jasp &\gasp
  &\jasp &\gasp
  &\jasp &\gasp
   \\
\midrule
\multirow{4}{2mm}{1} 
   & 8  & 62& 15& 67&15& 52&16&34 &  10\\   
   &  16& 66& 16& 76&17& 62&19&39 &  9  \\ 
   &  32& 64& 15& 76&17& 69&18&46 &  8   \\
   &  64& 61& 15& 74&17& 74&17&56 &  11  \\
 \midrule
\multirow{4}{2mm}{4} 
   & 8  &123&19&127&20&107&18 &72  & 13 \\   
   &  16&121&19&131&20&128&22 &95  & 15  \\ 
   &  32&116&19&128&20&127&22 &109 & 16   \\
   &  64&110&19&123&20&124&20 &113 & 17   \\
 \midrule
\multirow{4}{2mm}{7} 
   & 8  &157&21&170&22&151&21&106 & 17  \\   
   &  16&157&21&172&23&167&26&132 & 18   \\ 
   &  32&150&21&169&23&165&26&148 & 18  \\
   &  64&143&20&163&22&162&23&151 & 19  \\
 \midrule
\multirow{4}{2mm}{10} 
   & 8  &207&24&215&25&203&24&139&18 \\   
   &  16&200&24&216&25&216&27&168&20  \\ 
   &  32&192&24&213&25&213&27&186&19   \\
   &  64&182&23&205&24&207&25&190&21   \\
\bottomrule
\end{tabular}}
\vspace{2ex}  
\caption{\it \textbf{Vectorial reaction-diffusion equation.} 
  Number of PCG iteration counts for the 
  divergence-conforming HDG scheme \eqref{hdg-vrd}.} 
\label{tab:2} 
\end{table}

\begin{table}[ht]
\centering 
\resizebox{0.9\columnwidth}{!}
{%
\begin{tabular}{cc| cc|cc|cc|cccc}
\toprule
$k$& $N$  
   &\multicolumn{2}{c|}{
  $\tau_1=1, \tau_2=1$}
  &
  \multicolumn{2}{c|}{
  $\tau_1=1, \tau_2=10^4$}
  &
  \multicolumn{2}{c|}{
  $\tau_1=10^4, \tau_2=1$}
  &
  \multicolumn{2}{c}{
  $\tau_1=10^4, \tau_2=10^4$}
\tabularnewline
\midrule
  & 
  &\aasp
  &\dasp &\aasp
  &\dasp &\aasp
  &\dasp &\aasp
  &\dasp \\
\midrule
\multirow{4}{2mm}{1} 
   & 8  &16  & 11&38  & 25&26  & 17&12  & 6 \\   
   &  16&16  & 11&35  & 24&27  & 20&13  & 7  \\ 
   &  32&16  & 10&34  & 23&26  & 21&14  & 8   \\
   &  64&15  & 10&33  & 23&25  & 19&16  & 9   \\
 \midrule
\multirow{4}{2mm}{4} 
   & 8  &25  & 18&39  & 26&25  & 19&15  & 14\\   
   &  16&25  & 17&40  & 27&30  & 21&19  & 15 \\ 
   &  32&24  & 16&39  & 27&31  & 22&23  & 16  \\
   &  64&23  & 15&38  & 27&31  & 22&25  & 16  \\
 \midrule
\multirow{4}{2mm}{7} 
   & 8  &29  & 22&43  & 29&27  & 23&21  & 20 \\   
   &  16&28  & 20&43  & 29&33  & 24&25  & 20  \\ 
   &  32&28  & 19&42  & 30&34  & 25&28  & 19 \\
   &  64&26  & 18&42  & 30&33  & 24&29  & 19 \\
 \midrule
\multirow{4}{2mm}{10} 
   & 8  &34  & 25&49  & 34&31  & 26&26  & 23\\   
   &  16&33  & 24&47  & 34&36  & 27&30  & 23 \\ 
   &  32&33  & 22&47  & 33&38  & 27&33  & 22  \\
   &  64&31  & 21&44  & 32&37  & 27&34  & 22  \\
\bottomrule
\end{tabular}}
\vspace{2ex}  
\caption{\it \textbf{Generalized biharmonic equation, simply supported boundary
  condition.} 
  Number of PCG iteration counts for the 
  CIP-HDG scheme \eqref{hdg-bh}.} 
\label{tab:3} 
\end{table}

\begin{table}[ht]
\centering 
\resizebox{0.9\columnwidth}{!}
{%
\begin{tabular}{cc| cc|cc|cc|cccc}
\toprule
$k$& $N$  
   &\multicolumn{2}{c|}{
  $\tau_1=1, \tau_2=1$}
  &
  \multicolumn{2}{c|}{
  $\tau_1=1, \tau_2=10^4$}
  &
  \multicolumn{2}{c|}{
  $\tau_1=10^4, \tau_2=1$}
  &
  \multicolumn{2}{c}{
  $\tau_1=10^4, \tau_2=10^4$}
\tabularnewline
\midrule
  & 
  &\aasp
  &\dasp &\aasp
  &\dasp &\aasp
  &\dasp &\aasp
 &\dasp  \\
\midrule
\multirow{4}{2mm}{1} 
   & 8  &28  & 17&36  & 24&27  & 18&14  & 6  \\   
   &  16&37  & 23&39  & 27&27  & 20&14  & 8   \\ 
   &  32&50  & 34&48  & 33&26  & 20&16  & 11   \\
   &  64&67  & 49&62  & 48&27  & 23&22  & 15   \\
 \midrule
\multirow{4}{2mm}{4} 
   & 8  &46  & 28&46  & 29&25  & 19&16  & 14\\   
   &  16&60  & 41&58  & 37&30  & 22&20  & 17 \\ 
   &  32&77  & 53&74  & 50&33  & 23&27  & 20  \\
   &  64&102  &74&99  & 71&40  & 27&36  & 23  \\
 \midrule
\multirow{4}{2mm}{7} 
   & 8  &53  & 34  &52  & 33  &28  & 24&21  & 21 \\   
   &  16&69  & 46  &66  & 42  &34  & 25&24  & 22  \\ 
   &  32&90  & 60  &86  & 58  &36  & 26&32  & 22 \\
   &  64&117  & 80 &112  & 79 &44  & 30&42  & 26 \\
 \midrule
\multirow{4}{2mm}{10} 
   & 8  &57  & 39&54  & 36&32  & 27&26  & 24\\   
   &  16&73  & 49&70  & 45&37  & 28&30  & 26 \\ 
   &  32&95  & 64&91  & 60&39  & 29&34  & 27  \\
   &  64&126 & 87&119 & 84&47  & 31&45  & 28  \\
\bottomrule
\end{tabular}}
\vspace{2ex}  
\caption{\it \textbf{Generalized biharmonic equation, clamped boundary
  condition.} 
  Number of PCG iteration counts for the 
  CIP-HDG scheme \eqref{hdg-bh}.} 
\label{tab:4} 
\end{table}

\section{Conclusion}
\label{sec:conclude}
We applied 
the ASP theory to construct robust preconditioners for 
the  HDG schemes for three class of elliptic operators
with a low order term. Extension of ASP theory for HDG scheme for other 
elliptic operator is the subject of ongoing research. 
Robust preconditioning of HDG schemes for saddle point systems is 
the subject of a forthcoming paper.
\bibliography{fsi}

\begin{thebibliography}{10}

\bibitem{AinsworthFu18}
{\sc M.~Ainsworth and G.~Fu}, {\em Fully computable a posteriori error bounds
  for hybridizable discontinuous {G}alerkin finite element approximations}, J.
  Sci. Comput., 77 (2018), pp.~443--466.

\bibitem{Arnold82}
{\sc D.~N. Arnold}, {\em An interior penalty finite element method with
  discontinuous elements}, SIAM J. Numer. Anal., 19 (1982), pp.~742--760.

\bibitem{Ayuso14}
{\sc B.~Ayuso~de Dios, F.~Brezzi, L.~D. Marini, J.~Xu, and L.~Zikatanov}, {\em
  A simple preconditioner for a discontinuous {G}alerkin method for the
  {S}tokes problem}, J. Sci. Comput., 58 (2014), pp.~517--547.

\bibitem{Brenner05}
{\sc S.~C. Brenner and L.-Y. Sung}, {\em {$C^0$} interior penalty methods for
  fourth order elliptic boundary value problems on polygonal domains}, J. Sci.
  Comput., 22/23 (2005), pp.~83--118.

\bibitem{Chen15}
{\sc L.~Chen, J.~Wang, Y.~Wang, and X.~Ye}, {\em An auxiliary space multigrid
  preconditioner for the weak {G}alerkin method}, Comput. Math. Appl., 70
  (2015), pp.~330--344.

\bibitem{Cockburn01}
{\sc B.~Cockburn}, {\em The hybridizable discontinuous {G}alerkin methods}, in
  Proceedings of the {I}nternational {C}ongress of {M}athematicians. {V}olume
  {IV}, Hindustan Book Agency, New Delhi, 2010, pp.~2749--2775.

\bibitem{CockburnDubois14}
{\sc B.~Cockburn, O.~Dubois, J.~Gopalakrishnan, and S.~Tan}, {\em Multigrid for
  an {HDG} method}, IMA J. Numer. Anal., 34 (2014), pp.~1386--1425.

\bibitem{CockburnGopalakrishnanLazarov09}
{\sc B.~Cockburn, J.~Gopalakrishnan, and R.~Lazarov}, {\em Unified
  hybridization of discontinuous {Galerkin}, mixed and continuous {Galerkin}
  methods for second order elliptic problems}, SIAM J. Numer. Anal., 47 (2009),
  pp.~1319--1365.

\bibitem{Cockburn03}
{\sc B.~Cockburn, N.~C. Nguyen, and J.~Peraire}, {\em H{DG} methods for
  hyperbolic problems}, in Handbook of numerical methods for hyperbolic
  problems, vol.~17 of Handb. Numer. Anal., Elsevier/North-Holland, Amsterdam,
  2016, pp.~173--197.

\bibitem{hypre}
{\sc R.~D. Falgout and U.~M. Yang}, {\em {HYPRE}: a library of high performance
  perconditioners}, In Preconditiners, Lecture Notes in Computer Science, pp.
  632--641, 2002.

\bibitem{Henson02}
{\sc V.~E. Henson and U.~M. Yang}, {\em Boomer{AMG}: a parallel algebraic
  multigrid solver and preconditioner}, vol.~41, 2002, pp.~155--177.
\newblock Developments and trends in iterative methods for large systems of
  equations---in memoriam R\"{u}diger Weiss (Lausanne, 2000).

\bibitem{Kanschat14}
{\sc G.~Kanschat and N.~Sharma}, {\em Divergence-conforming discontinuous
  {G}alerkin methods and {$C^0$} interior penalty methods}, SIAM J. Numer.
  Anal., 52 (2014), pp.~1822--1842.

\bibitem{Lehrenfeld10}
{\sc C.~Lehrenfeld}, {\em Hybrid {Discontinuous} {Galerkin} methods for solving
  incompressible flow problems}.
\newblock Diploma Thesis, MathCCES/IGPM, RWTH Aachen, 2010.

\bibitem{LiXie16}
{\sc B.~Li and X.~Xie}, {\em B{PX} preconditioner for nonstandard finite
  element methods for diffusion problems}, SIAM J. Numer. Anal., 54 (2016),
  pp.~1147--1168.

\bibitem{Bui20}
{\sc S.~Muralikrishnan, T.~Bui-Thanh, and J.~N. Shadid}, {\em A multilevel
  approach for trace system in {HDG} discretizations}, J. Comput. Phys., 407
  (2020), pp.~109240, 26.

\bibitem{N91}
{\sc S.~V. Nepomnyaschikh}, {\em Mesh theorems on traces, normalizations of
  function traces and their inversion}, Soviet J. Numer. Anal. Math. Modelling,
  6 (1991), pp.~223--242.

\bibitem{ngsolve}
{\sc NGSolve}, {\em Finite element software}, www.ngsolve.org.

\bibitem{Nguyen03}
{\sc N.~C. Nguyen and J.~Peraire}, {\em Hybridizable discontinuous {G}alerkin
  methods for partial differential equations in continuum mechanics}, J.
  Comput. Phys., 231 (2012), pp.~5955--5988.

\bibitem{Cockburn02}
{\sc N.~C. Nguyen, J.~Peraire, and B.~Cockburn}, {\em Hybridizable
  discontinuous {G}alerkin methods}, in Spectral and high order methods for
  partial differential equations, vol.~76 of Lect. Notes Comput. Sci. Eng.,
  Springer, Heidelberg, 2011, pp.~63--84.

\bibitem{Schoberl16}
{\sc J.~Sch{\"o}berl}, {\em {C}++11 {I}mplementation of {F}inite {E}lements in
  {NGS}olve}, 2014.
\newblock {ASC Report 30/2014, Institute for Analysis and Scientific Computing,
  Vienna University of Technology}.

\bibitem{Xu96}
{\sc J.~Xu}, {\em The auxiliary space method and optimal multigrid
  preconditioning techniques for unstructured grids}, vol.~56, 1996,
  pp.~215--235.
\newblock International GAMM-Workshop on Multi-level Methods (Meisdorf, 1994).

\bibitem{Xu10}
\leavevmode\vrule height 2pt depth -1.6pt width 23pt, {\em Fast {P}oisson-based
  solvers for linear and nonlinear {PDE}s}, in Proceedings of the
  {I}nternational {C}ongress of {M}athematicians. {V}olume {IV}, Hindustan Book
  Agency, New Delhi, 2010, pp.~2886--2912.

\bibitem{Zaglmayr}
{\sc Z.~Zaglmayr}, {\em {High Order Finite Element Methods for Electromagnetic
  Field Computation}}, 2006.
\newblock PhD thesis, Johannes Kepler Universit\"at Linz, Linz.

\bibitem{Zikatanov08}
{\sc L.~T. Zikatanov}, {\em Two-sided bounds on the convergence rate of
  two-level methods}, Numer. Linear Algebra Appl., 15 (2008), pp.~439--454.

\end{thebibliography}
\bibliographystyle{siam}

\end{document}